\DeclarePairedDelimiter\floor{\lfloor}{\rfloor}
\DeclareRobustCommand{\BibTeX}{%
  {\normalfont B\kern-.05em{\scshape i\kern-.025em b}\kern-.08em \TeX}%
}
\title[Finite group actions]{Finite group actions\\ on abelian groups of finite Morley rank}
\date{29 May 2023}
\author{Alexandre Borovik}
\thanks{\copyright\ 2023 Alexadre Borovik. The last submitted version of the paper without changes made by publishers.}
\address{Department of Mathematics, University of Manchester, UK}
\email{alexandre $\gg at \ll$ borovik.net}
\newtheorem{hypothesis}{Hypothesis}
\newtheorem{problem}{Problem}
\newtheorem{lemma}{Lemma}[section]
\newtheorem{theorem}{Theorem}
\newtheorem{corollary}[lemma]{Corollary}
\newtheorem{fact}[lemma]{Fact}
\newtheorem*{fact*}{Fact}
\newcommand{\bq}{\begin{quote}}
\newcommand{\eq}{\end{quote}}
\newcommand{\bi}{\begin{itemize}}
\newcommand{\ei}{\end{itemize}}
\newcommand{\bd}{\begin{description}}
\newcommand{\ed}{\end{description}}
\newcommand{\ben}{\begin{enumerate}}
\newcommand{\een}{\end{enumerate}}
\newcommand{\bbm}{\begin{bmatrix}}
\newcommand{\ebm}{\end{bmatrix}}
\newcommand{\bea}{\begin{eqnarray*}}
\newcommand{\eea}{\end{eqnarray*}}
\newcommand{\bF}{\mathbb{F}}
\newcommand{\bG}{\mathbb{G}}
\newcommand{\rk}{\mathop{\rm rk}}
\newcommand{\gl}{\mathop{\rm GL}}
\newcommand{\SL}{\mathop{\rm SL}}
\newcommand{\End}{\mathop{\rm End}}
\begin{document}

\begin{abstract}
This paper develops some general  results  about actions of finite  groups on infinite abelian groups  of exponent $p$ in the finite Morley rank category. These results are applicable to a range of problems on groups of finite Morley rank  discussed in \cite{borovik-deloro}. Also, they yield a proof of the long-standing conjecture of linearity of irreducible definable actions of simple algebraic groups on elementary abelian $p$-groups of finite Morley rank \cite[Conjecture 12]{borovik-deloro} and \cite[Question B.38]{BN-book}. Crucially, these results are  needed for the papers by Ay\c{s}e Berkman and myself \cite{berkman-borovik,berkman-borovik-solvable} where we have proved an explicit, and best possible, upper bound for the degree of generic multiple transitivity for an action of a group of finite Morley rank on an abelian group.
\end{abstract}

\maketitle

\begin{center}
{\large\emph{Dedicated to Boris Zilber who laid the path}}
\end{center}
\bigskip

\section*{Preamble}

\begin{flushright}
\emph{No man is an Iland, intire of itselfe; every man \hspace{2em}\\
is a peece of the Continent, a part of the maine.}\\
John Donne, 1623
\end{flushright}

\bigskip

 The field of study reflected in the title of  this paper could appear to be rather esoteric; however, it is a part of a much wider area of classification of \emph{simple groups of finite Morley rank}. I recommend Gregory Cherlin's informative and incisive  survey \cite{Cherlin2023} of the current state of this classification. In short, there are  three types of  connected  simple  groups of finite Morley rank: \emph{degenerate} (they do not contain involutions), \emph{odd} (contain involutions, but do not contain infinite groups of  exponent $2$), \emph{even} (contain infinite groups of exponent $2$).  Groups of even type have been identified as simple algebraic groups over algebraically closed fields of  characteristic $2$ \cite{ABC-book}. Little is known about infinite simple groups of degenerate type beyond a fantastic result by Fr\'{e}con on groups of Morley rank 3 \cite{Frecon2018} (beautifully elucidated by Corredor and Deloro \cite{Corredor-Deloro2022}).
 On the contrary, quite a lot is known about groups of odd type, but still not enough for proving for them the special case of the \emph{Cherlin-Zilber Algebraicity Conjecture}:

\begin{quote}
Simple groups of finite Morley rank and odd type are algebraic groups over algebraically closed fields of odd or zero characteristic.
\end{quote}
Since the appearance of the classification of simple groups of even type \cite{ABC-book} the study of simple groups of finite Morley rank has been moving in two streams:
\begin{itemize}
  \item[Stream 1:]  Proving the Cherlin-Zilber Algebraicity Conjecture for groups of odd type, as outlined in Cherlin's survey \cite{Cherlin2023}.
  \item[Stream 2:]  Study of actions, and specifically actions as automorphisms
of abelian groups, of groups of finite Morley rank on the basis of the knowledge
already accumulated in the efforts to classify the simple ones. This direction is motivated by the fact that groups of finite Morley rank appeared as \emph{binding groups} in model theory, hence \emph{act} somewhere. A survey of this direction can be found in \cite{borovik-deloro}.
\end{itemize}

Stream 2 was initiated by Cherlin who suggested to me to start looking for areas of possible application of the classification of groups of even type \cite{ABC-book}. After some discussion we decided to take a look at \emph{definably primitive}   permutation groups $(G, X)$, that is, definable faithful actions of $G$ on $X$ such that there are no nontrivial definable $G$-invariant equivalence relation on $X$. Since the stabilisers of points and orbits of $G$ on $X$ are definable, this means, in particular, that a definably primitive action is transitive. `Faithful' here means that only $1$ fixes all elements of $X$.
We proved the following.
\begin{quote} \textbf{Theorem} \cite[Theorem 1]{BCPermutation}
There exists a function $f: \mathbb{N} \rightarrow \mathbb{N}$ with the following property.
If a group $G$ of finite Morley rank acts on a set $X$ of finite Morley rank definably and definably primitively, then:
\[\rk(G) \leqslant f(\rk X).\]
\end{quote}
The proof of this result is an indicator of the role of the classification technique in Stream 2: an answer to a basic question about actions of groups of finite Morley rank required the use of the classification of simple groups of even type  together with  the full range of techniques developed for the ongoing study of groups of odd type.

Macpherson and Pillay \cite{MPPrimitive} (following an established tradition from finite group theory) say that  a group $G$ of finite Morley rank is of \emph{affine type} if  $G$ is a semidirect product of definable subgroups $G = V\rtimes H$, where $V$ is either elementary abelian or divisible torsion-free abelian, and the group $H$ acts on $V$ faithfully and $V$ does not leave any definable subgroup of $V$ other than $0$ or $V$ invariant. In that case the natural action of $G$ on the coset space $G/H$ is definably primitive.

This is an important class of primitive groups of permutations. In  finite group theory, this class made its first appearance in  the celebrated theorem by Galois:
\bq
A finite solvable  primitive permutation group has degree $p^k$ (that is, the set on which it acts contains $p^k$ points)  for some prime number $p$.
\eq
and was the reason why Galois constructed Galois fields \cite{Neumann2006}.

To cut a long story short, the present paper has been written because its results are essential for the study of primitive permutation groups of finite Morley rank and affine type carried out, over some years, by  Ay\c{s}e Berkman and myself \cite{berkman-borovik-sharp,berkman-borovik,berkman-borovik-solvable}. Our project calls on a surprising range of results and techniques, including almost everything which has been done so far in various approaches to the Cherlin-Zilber Conjecture. In particular, the present paper uses basic concepts from the representation theory of finite groups and associative algebras \cite{curtis-reiner,erdmann-holm} appropriately adapted for the finite Morley rank context.

Finally, a word about the epigraph from John Donne. I proved the results of the present paper in a de-facto imprisonment of a strict lockdown\footnote{I told this bizarre story  in \cite{borovik-inherited}.}. I would not even try to prove them if I did not see my work as part of a much bigger collective project. The lockdown episode reminded me that I started my work in groups of finite Morley rank 40 years ago in also almost complete isolation but got critically important help from \emph{our} (as I can now call it) community. This was how I described it in the Introduction to \cite{ABC-book}.
\begin{quote}
Vladimir Nikanorovich Remeslennikov in 1982 drew my
attention to Gregory Cherlin's paper \cite{Cherlin1979} on groups of
finite Morley rank and conjectured that some ideas from
my work [on periodic linear groups] could be used in this
then new area of algebra. A year later Simon Thomas
sent to me the manuscripts of his work on locally finite
groups of finite Morley rank. Besides many interesting
results and observations his manuscripts contained also
an exposition of Boris Zilber's fundamental results on
$\aleph_1$-categorical structures which were made known to many
Western model theorists in Wilfrid Hodges' translation of
Zilber's paper \cite{Zilber1977} but which, because of the regrettably
restricted form of publication of the Russian original,
remained unknown to me.
\end{quote}
You can learn more about this story from Wilfrid Hodges \cite{Hodges2023}; a historic perspective is presented by Bruno Poizat \cite{poizat2023}. In the early 1980s Hodges worked hard on development of links and channels of communication  between   Western and Soviet model theorists -- and, ironically, also directed me to Zilber's works. I knew Boris personally, but we lived in different cities in Siberia which was  as if we were on different planets. In years that followed I have learnt a lot from Boris,  but here I wish to emphasise perhaps the most important lesson:  the importance of looking at the wider landscapes of mathematics, something that I am trying to do in this paper.

\section{A more technical introduction}

\subsection{Ranked universes} \label{sec:Universe} First of all, we work in a ranked universe in the sense of \cite[Chapter 4]{BN-book}. In particular, a\emph{ group of finite Morley rank}  means for us  `\emph{a group definable in a ranked universe}'. When we have several algebraic structures  of finite Morley rank in the same statement, with definable actions and relations between them, it means that all these structures and relations belong to the same ranked universe $\mathcal{U}$  (which is usually not mentioned). This convention is convenient because it simplifies the language and makes arguments easily accessible to group theorists with some knowledge of the finite group theory or the theory of linear algebraic groups. So far I am aware of just one paper \cite{Tindzogho-Ntsiri2017} where the expression `a group definable in a ranked universe' is used systematically.

In applications of results of the present paper the universe $\mathcal{U}$ is usually the universe of interpretable sets of some group $\mathcal{G}$ of finite Morley rank. For example, in one of  the principal results of the present paper, Theorem  \ref{th:linearity}, the group $\mathcal{G}$ is the semidirect product $V \rtimes G$. In its turn, Theorem  \ref{th:linearity} will be applied to a point stabiliser in some generically multiply transitive permutation group $\mathcal{H}$ of finite Morley rank, see further discussion in \cite{berkman-borovik-solvable}.

\subsection{Some terminology for group actions}
We use terminology and  notation from the books \cite{ABC-book,BN-book} and keep in mind
the  ranked universe convention of Section \ref{sec:Universe}.

Let $V$   be an infinite abelian group of finite Morley rank, and  $X$ a finite set of definable isomorphisms from $V$ onto  $V$ closed under composition and inversion, so $X$ is a finite group. We shall say in this situation that the finite group $X$ acts on $V$ \emph{definably}. We include the elements of $X$  in the signature of the language and treat them as function symbols. We shall also say that $V$ is an $X$-module or that $(V,X)$ is an $X$-module.

This paper is restricted to the most important case when $V$ is elementary abelian, that is, abelian and periodic of exponent $p$ for some prime number $p$. We shall usually treat  $V$ as a vector space over the prime field $\bF_p$.

We use additive notation for the group operation in $V$. The key player in our study is the \emph{subring} $R$ generated by $X$  in the ring  $\End V$ of endomorphisms of $V$. Obviously, $R$ is finite and can be viewed as a finite dimensional $\bF_p$-algebra. It is important to observe that elements of $R$ are \emph{definable} endomorphisms of $V$. We use the usual name for $R$: it is the \emph{enveloping ring} (or \emph{enveloping algebra over $\bF_p$}) of the action of the group $X$ on $V$ and will be denoted $$R = E(X).$$

In a more general situation, if $G$ is any other group which acts on the group $V$, we say that the action of $G$ is \emph{irreducible} if $0$ and $V$ are the only $G$-invariant subgroups of $V$; we shall also say that $V$ is a \emph{simple $G$-module} or \emph{simple $R(G)$-module}. In our setup, the action of the finite group $X$ on $V$ cannot be irreducible: take $v\ne 0$, then the orbit of $v$ under the action of $X$ is finite and generates a finite $X$-invariant subgroup, while $V$ is infinite.  Therefore we need to adjust the concept of irreducibility to make it usable for the group $X$.

We shall say that the action of the group $X$ is   \emph{smooth} if any $X$-invariant \emph{connected} definable subgroup of $V$ equals $0$ or $V$, and, equivalently, that $V$ is a \emph{smooth $X$-module}. This is a very natural concept, and examples are abundant. For example, let $G = \mathop{\rm GL}_n(K)$  for  an algebraically closed field $K$ of characteristic $p>0$, acting naturally on $V=K^n$. Then the action  of $\mathop{\rm GL}_n(\mathbb{F}_{p^k}) < G$ on $V$ is smooth.
\subsection{Finite groups and Jordan properties}

Our paper starts with a discussion of the following problem, which naturally arises in \cite{berkman-borovik}.

\begin{problem} \label{prob:lower bound}
 Given a finite group $X$, find good lower bounds for the Morley ranks of faithful smooth $X$-modules of fixed positive characteristic $p$.
\end{problem}

The first result of the paper, Theorem \ref{th:lower bound} below, reduces Problem \ref{prob:lower bound} to a similar question about the minimal degree of  faithful  finite dimensional linear representations of the group $X$ over an algebraically closed field of characteristic $p$, the latter having been studied in finite group theory for quite some time \cite{Dickson1908,Landazuri-Seitz1974, Wagner1976, Wagner1977}.

Let $X$ be a finite group and $p$ a prime number. We introduce two parameters characterising its size and complexity.
\bi
\item $d_p(X)$ is the minimal degree of a faithful linear representation of $X$ over the algebraically closed field $\overline{\bF_p}$.
\item $r_p(X)$ is the minimal Morley rank of an infinite elementary abelian $p$-group $V$ of finite Morley rank such that $X$ acts on $V$ faithfully, definably, and smoothly.
\ei

\begin{theorem} \label{th:lower bound} Under the assumptions of Problem \emph{\ref{prob:lower bound}},
\[
d_p(X) = r_p(X).
\]
\end{theorem}

This theorem is one of a large body of results which establish close connections and analogies between groups of finite Morley rank, on one hand, and finite groups and algebraic groups, on another.

The following statement is an immediate corollary of Theorem~\ref{th:lower bound}  via the famous theorem of  Larsen and Pink  about  finite linear groups   \cite{Larsen-Pink2011}, see Section~\ref{sec:LP}, Fact~\ref{fact:LP}.

\begin{theorem} \label{th:-corollary-via-LP}  There is a function
\[
J: \mathbb{N} \longrightarrow \mathbb{N}
\]
with the following property.\\

If $H$ is a finite simple group which acts definably and faithfully on an infinite connected elementary abelian $p$-group of Morley rank $n$ then either
\bi
\item  $|H| \leqslant J(n)$, or
\item $H$ is a group of Lie type in characteristic $p$.
\ei
\end{theorem}

Section \ref{Jordan-type} contains  a brief discussion of Theorem \ref{th:-corollary-via-LP} and other ``theorems of Jordan type''; there is a feeling that there could be some general  model-theoretical facts underpinning them all.

\subsection{From finite groups to simple algebraic groups over algebraically closed fields} \label{sec:intermediate-preliminaries}

In Sections  \ref{sec:linearity-proof} to \ref{sec:linearity-2} we shall study definable actions of simple algebraic groups $G$ over algebraically  closed fields on elementary abelian $p$-groups of finite Morley rank.
Our approach is based on the analysis of actions on $V$ of finite subgroups of $G$, and on the use of the technique developed in Section \ref{sec:enveloping}.  Theorem \ref{fact:intermediate-subgroup} stated below is the principal tool for transfer of information on certain finite subgroups of $G$ to the group as a whole $G$ itself. The formulation of Theorem \ref{fact:intermediate-subgroup} needs to be preceded by a few words on simple algebraic groups.

First of all, there is some mismatch in the terminology: it is a traditional convention of the algebraic groups theory, that an (infinite) algebraic group $G$ is called  \emph{simple} if  $G$ is perfect, that is, $[G,G]=G$, with $G/Z(G)$ simple in the usual sense of this word  and $Z(G)$ finite. A finite group $G$ with the same properties is called in the finite group theory \emph{quasisimple}. So if $G$ is finite, `simple' really means simple: no non-trivial proper normal subgroups. Of course, every finite subgroup is algebraic,  but  it will be always clear from the context whether a particular algebraic group is finite or not.

Let $G = \mathbb{G}(K)$ be the group of $K$-points of a simple algebraic group $\bG$ defined over an algebraically closed field $K$ of characteristic $p$. In model theory, there is convention to call $G$ a simple algebraic group over $K$. By Poizat \cite{poizat1988} the group $G$ is bi-interpretable with the field $K$. On the other hand $G$ is birationally isomorphic with the group of points over $K$ of a simple algebraic group $\mathbb{H}$ defined over the prime field  $\bF_p$ \cite{Borel1970}. Since $G$ and $K$ are bi-interpretable, this isomorphism is definable in $G$ as a pure group; hence we can assume without loss of generality that $\bG$ is defined over $\bF_p$ and, for every intermediate field $\bF_p < F < K$, the group $G$ contains  the  subgroup $G(F)=\bG(F)$ of $F$-points.

Let now $K_\infty$ be  the algebraic closure of the prime field\/ $\mathbb{F}_p$ in $K$ and  $G_\infty$  the group of points of $G$ over $K_\infty$. The group $G_\infty$ is the union of finite subgroups $G(\mathbb{F}_{p^k})$ for all natural numbers $k$, hence $G_\infty$ is locally finite. As we shall see in later sections, the restriction to the group $G_\infty$ of a definable action of the group $G$  on an elementary abelian $p$-group of finite Morley rank could be studied by methods developed in Section \ref{sec:enveloping}.

\begin{theorem}  \label{fact:intermediate-subgroup}
Let $K$ be an algebraically closed field of characteristic $p > 0$ and $K_\infty$  the algebraic closure of the prime field\/ $\mathbb{F}_p$ in $K$. Let $G$ be  a semisimple algebraic group over $K$ and $G_\infty$  the group of points of $G$ over $K_\infty$. If  $M$ is  subgroup of $G$ containing $G_\infty$ and the structure $(G,M)$ has finite Morley rank, then $M=G$.
\end{theorem}

\subsection{Linearisation of actions of simple algebraic groups}

In Section \ref{sec:linearity} we prove Theorem \ref{th:linearity}. Here, an action of a group $G$ on an abelian group $V$ is called
\begin{itemize}
\item  \emph{irreducible} if $V$ contains no $G$-invariant subgroups other than $0$ and $V$, and
\item \emph{definably irreducible} if $V$ contains no $G$-invariant \emph{definable} subgroups other than $0$ and $V$.
\end{itemize}
If $G$ is a connected group of finite Morley rank, then these two properties are equivalent \cite[Lemma I.11.3]{ABC-book}.

The following Theorem \ref{th:linearity} in Section \ref{sec:linearisation} answers the long-standing conjecture of linearity of irreducible definable actions of simple algebraic groups on elementary abelian $p$-groups of finite Morley rank \cite[Question B.38]{BN-book} with further details inquired about in \cite[Conjecture 12]{borovik-deloro}. Crucially, Theorem \ref{th:linearity}(1) is  needed for the papers by Ay\c{s}e Berkman and myself \cite{berkman-borovik,berkman-borovik-solvable}
\begin{theorem}  \label{th:linearity}
Let $K$ be an algebraically closed field of characteristic $p >0$ and $G$ be  a connected algebraic group over $K$. Assume that $G$ acts definably and faithfully on an elementary abelian  $p$-group $V$ of finite Morley rank. Assume that this action is definably irreducible. THen the following is true:
\ben
\item The group $V$ has a structure of a finite dimensional $K$-vector space compatible with the action of $G$.
\item Assume in addition that $G$ is simple. Let $\widehat{G}$ be a simply connected simple algebraic group over $K$ covering $G$. Then $ \rho: \widehat{G} \longrightarrow G \hookrightarrow \mathop{GL}(V)$ is an irreducible $K$-linear representation of the  group $\widehat{G}$ on  $V$.  There are
irreducible \emph{rational} representations $\omega_1, \dots, \omega_m$ of the group $\widehat{G}$, and
there are
$(V\rtimes G)$-definable automorphisms $\varphi_1, \dots, \varphi_m$ of the field $K$,
such that $\rho = \bigotimes_{i = 1}^m \varphi_i\omega_i$. In particular, the representation $\rho$ is $(V\rtimes G)$-definable.
\een

\end{theorem}

Surprisingly, the following immediate corollary  of Theorem \ref{th:linearity}(1) for algebraic groups appears to be new. However, Adrien Deloro informed me that a special case of  this result,  where $G$ acted transitively on $V\smallsetminus \{0\}$, had been proven in 1983 by Knop \cite[Satz 1]{Knop1983}.

\begin{corollary} \label{corollary: algebraic-groups}
Let  $H= V\rtimes G$ be an algebraic group over an algebraically closed field $K$ of characteristic $p>0$, where $G$ is a connected algebraic group and $V$ is a unipotent \emph{(}written in additive notation\emph{)}.

Assume that  $V$ does not have closed $G$-invariant subgroups other than $0$ and $V$.

Then $V$ is an abelian group of exponent $p$ and has a structure of a finite dimensional vector space over $K$ invariant under the action of $G$.
\end{corollary}

Here, a \emph{unipotent} group in characteristic $p>0$ is a linear algebraic group containing only $p$-elements.

Theorem \ref{th:linearity}(2) is a more precise and detailed version of the following result by Bruno Poizat.

\begin{fact} \emph{(Poizat \cite[Theorem 2]{poizat2001})}
If $K$ is a field of finite Morley rank and non-zero characteristic $p$, any simple definable subgroup $G$ of $\mathop{\rm GL}_n(K)$ is definable in the language of the field $K$ augmented by a finite number of definable field automorphisms.
\end{fact}

In characteristic $0$, a much stronger result is known:

\begin{fact}[Combination of \cite{LWCanada,MPPrimitive, poizat2001}, and \cite{BBDefinably, DWGeometry}]
Let $(G, V)$ be a faithful, irreducible module of finite Morley rank where $G$ is infinite and $V$ is torsion-free. Then there is a definable field over which $V$ is a finite-dimensional vector space and $G$ is a subgroup of  $\gl(V)$. If in addition $G$ is simple and contains a non-identity unipotent element or an involution then $G$ is Zariski closed in $\gl(V)$.
\end{fact}

\subsection{Linear groups of finite Morley rank} Let us return to Theorem \ref{th:linearity}(1) and use notation from its statement. This theorem says that $G$ is a subgroup of the finite dimensional general linear group $\mathop{\rm GL}_K(V)$. This is a natural question:
\bq
Is the group $G$\/ Zariski closed in $\mathop{\rm GL}_K(V)$?
\eq

If the automorphisms $\varphi_1, \dots, \varphi_m$ in part (2) of Theorem \ref{th:linearity} are Frobenius maps or their inverses, then the representation $G \longrightarrow \mathop{\rm GL}_K(V)$ is rational and its image (which is $G$), is Zariski closed in $\mathop{\rm GL}_K(V)$. But here we encounter one of the oldest problems of the theory of groups of finite Morley rank.

\begin{problem}[Angus Macintyre, {\cite[Question B35, p.~364]{BN-book}}]
Can a structure of the form
\[
\langle K; +, \cdot, \varphi \rangle,
\]
where $\langle K; +,\cdot \rangle$ is an algebraically closed field of characteristc $p > 0$ and
$\varphi \in \mathop{\rm Aut}(K)$ is neither a Frobenius automorphism nor the inverse of one, have finite Morley rank?
\end{problem}

If the answer to Macintyre's problem in \emph{no}, then, in Theorem \ref{th:linearity}, the group $G$ is Zariski closed in $\mathop{\rm GL}_K(V)$. Otherwise, this is not true in general.

\subsection{Linearisation of actions of  solvable-by-finite groups}
Finally, we extend Theorem \ref{th:linearity} to solvable-by-finite groups.

\begin{theorem} \label{th:linearity-2}
Let $K$ be an algebraically closed field of characteristic $p >0$ and $G$ be  a connected algebraic group over $K$. Assume that $G$ acts definably and faithfully on an elementary abelian  $p$-group $V$ of finite Morley rank. Assume that this action is definably irreducible.
Then $G^\circ$ is a good torus and  $V$ has a definable structure of a finite dimensional $K$-vector space compatible with the action of $G$, with the field $K$ definable in $V\rtimes G$.

\end{theorem}

\section{Enveloping algebras enter the scene}
\label{sec:enveloping}

\subsection{Definitions and generalities} In this section we work under assumptions which are weaker than those of Theorem \ref{th:lower bound}:
\bi
\item $X$ is a finite group which acts, definably and smoothly, on an infinite connected elementary abelian $p$-group $V$ of finite Morley rank.
\ei

Notice that we do not assume that the action of the group $X$ is faithful. This allows us to pass these assumptions to factor modules of $V$ by definable $X$-submodules. The group $V$ is treated as a vector space over  $\bF_p$.

 \begin{lemma}
 The  canonical action of the group algebra $A=\bF_p[X]$ on $V$ is definable.
 \end{lemma}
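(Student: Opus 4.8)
The plan is to reduce the action of $R$ to a finite combination of maps already known to be definable, so that the statement becomes pure bookkeeping. First I would record that, since $V$ is an elementary abelian $p$-group, it carries a natural structure of $\bF_p$-vector space, and that for each scalar $a \in \bF_p$ — identified with the integer $a \in \{0,1,\dots,p-1\}$ — the multiplication map $V \longrightarrow V$, $v \mapsto av$, is definable over $\emptyset$: it is simply the group term $v + v + \cdots + v$ with $a$ summands. This is the only point where the hypothesis that $V$ is an elementary abelian $p$-group (rather than an arbitrary connected abelian group) is genuinely used: it guarantees that multiplication by an integer descends to multiplication by its residue mod $p$ and is given by a group word of bounded length.

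Next I would fix an arbitrary element $r = \sum_{g \in G} a_g\, g \in R$, with $a_g \in \bF_p$. The canonical action sends $v$ to $\rho_r(v) = \sum_{g \in G} a_g\,(g\cdot v)$. Each map $v \mapsto g\cdot v$ is definable by hypothesis, the elements of $G$ being function symbols of the language; each map $w \mapsto a_g w$ is definable by the previous paragraph; and addition in $V$ is definable. Since $G$ is finite, the sum is finite, so $\rho_r$ is a composition of finitely many definable maps and hence definable (over $\emptyset$, after naming the elements of $G$).

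Finally I would assemble these into a single definition. Because $G$ is finite, $R = \bF_p[G]$ is a finite set of size $p^{|G|}$; enumerate it as $R = \{r_1,\dots,r_m\}$. The action map $R \times V \longrightarrow V$ is then
\[
\bigcup_{i=1}^{m} \{r_i\} \times \bigl\{\,(v,\rho_{r_i}(v)) : v \in V\,\bigr\},
\]
a finite union of definable sets, hence definable. Equivalently, one simply notes that a $\text{(finite ring)}$-action on $V$ is by definition given by finitely many definable unary maps on $V$, and we have exhibited these.

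I do not expect a real obstacle here; the only step that truly requires care is the definability of $\bF_p$-scalar multiplication, which I would state explicitly rather than leave implicit, since it is where the arithmetic of $V$ (exponent $p$) is invoked. Everything else is the closure of the class of definable maps under finite sums and compositions.
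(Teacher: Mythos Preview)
Your argument is correct and follows the same line as the paper's proof: each element of $R$ acts as a finite $\bF_p$-linear combination of the definable endomorphisms coming from $G$, hence definably. You have simply spelled out in detail (the definability of scalar multiplication, the finite case split over $R$) what the paper compresses into a single sentence.
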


\begin{proof}
Indeed every element from $A$ acts on $V$ as a sum of definable endomorphisms (which came from $X$) and is therefore definable.
\end{proof}

Another important player is the enveloping algebra of $X$ on $V$, that is, the ring $R$ generated in ${\rm End}_{\bF_p} V$ by elements of $X$, or, which is the same, the image of $A$ in ${\rm End}_{\bF_p} V$. We denote this ring by  $E(X)$.

Both $A$ and $R$ are finite dimensional algebras over $\bF_p$,  their action on $V$ is smooth while the action of $R$ on $V$ is also faithful.
We shall treat $V$ as a right $A$-module and right $R$-module, and, enlarging the signature of the language, we will treat elements from $A$ and $R$ as function symbols.

A finite-dimensional associative algebra over a finite field $\mathbb{F}_p$  of prime order $p$ is the same as a finite ring of characteristic $p$. Their structure is of course well known. We will need a definition of the \emph{Jacobson radical} $J(R)$ of a finite dimensional algebra $R$ over a field: $J$ is
the intersection of all maximal left ideals of $R$. It can be proved that  $J$ is an ideal, and, moreover, $J$ can be characterised as the set of all elements $r\in R$ such that $Mr = 0$ for every simple (or irreducible, which is the same) $R$-module $M$.

\begin{fact}[{Wedderburn-Maltsev Theorem, \cite[Theorem VI.2.1]{Drozd-Kirichenko1994}}] \label{fact:finite-algebras} Let $R$ be a   finite-dimensional associative algebra with identity $1$ over a finite field\/ $\mathbb{F}_p$  of prime order $p$ and $J$ its Jacobson radical. Then
\bi
\item[(a)]
$R = J+S$ where $S$ is a semisimple algebra, $J\cap S = 0$ and $S$ is the direct sum of matrix algebras
\[
S = S_1 \oplus\cdots\oplus S_k, \;\; S_i \simeq M_{d_i\times d_i}(\mathbb{F}_{p^{m_i}}), \;\; i = 1,2, \dots, k.
\]

\item[(b)]Let $Q = 1+J$. Then  $Q$  is a normal $p$-subgroup in the  group of units $R^*$ of $R$. Moreover,  $R^*$ is a semidirect product  $R^* = Q \rtimes S^*$ of $Q$ and the  group of units $S^*$ of $S$.  In particular,
    \[
S^* \simeq {\gl}_{d_1}(\mathbb{F}_{p^{m_1}}) \times\cdots\times {\gl}_{d_k}(\mathbb{F}_{p^{m_k}}).
    \]
\ei
\end{fact}

\subsection{The case of smooth action} The arguments below freely use, without specific references, definitions and results from more basic parts of the theory of modules which can be found, for example, in \cite{erdmann-holm}.

 The group $X$ will not be mentioned in the rest of this section. Rather, we will work under the following hypothesis.

\begin{hypothesis} \label{hypo}
In the notation of Fact~{\rm \ref{fact:finite-algebras}}, $R$ is a finite dimensional\/ $\bF_p$-algebra acting definably, smoothly, and faithfully on an  infinite elementary abelian $p$-group $V$ of finite Morley rank.
\end{hypothesis}

\begin{lemma} \label{lemma:semisimple}
Under  Hypothesis\emph{ \ref{hypo}}, the algebra $R$ is semi-simple and\/ $V$ is a semisimple $R$-module.
\end{lemma}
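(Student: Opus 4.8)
The plan is to show that the Jacobson radical $J(E)$ acts trivially on $V$, which forces $J(E)=0$ by faithfulness of the $E$-action, whence $E$ is semisimple; semisimplicity of $V$ as an $E$-module then follows automatically since every module over a semisimple algebra is semisimple. The key observation is that the finite Morley rank structure lets us convert the algebraic nilpotence coming from $J(E)$ into the smooth-irreducibility dichotomy.

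First I would consider $W = V\cdot J(E)$, the image of $V$ under the radical. Since $J(E)$ is a two-sided ideal, $W$ is an $E$-submodule, hence a $G$-submodule (equivalently an $R$-submodule), and it is definable because each element of $E$ acts as a definable endomorphism and $J(E)$ is finitely generated as an $\bF_p$-vector space, so $W$ is a finite sum of definable images. The crucial point is to pass to the connected component $W^\circ$, which is a definable connected $E$-invariant subgroup of $V$. By smooth irreducibility, $W^\circ = 0$ or $W^\circ = V$. Here I would exploit that $J(E)$ is nilpotent: say $J(E)^n = 0$. If $W^\circ = V$, then $V = V\cdot J(E)$ after passing to connected components would iterate to give $V = V\cdot J(E)^n = 0$, contradicting that $V$ is infinite — so I need to track how $\rk$ behaves under multiplication by the radical and argue that $W$ being of full rank leads to a descent contradiction via nilpotence.

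The main obstacle, and the step deserving the most care, is precisely this interplay between connected components and the (possibly disconnected) submodule $W = V\cdot J(E)$. Multiplying by $J(E)$ need not preserve connectedness on the nose, and $W$ itself might be all of $V$ even while $W^\circ$ is proper, so the naive iteration $V \supseteq VJ \supseteq VJ^2 \supseteq \cdots$ must be run at the level of connected components or, better, at the level of Morley rank. I would set up a rank-based descending argument: either show $\rk(VJ) < \rk(V)$ (using that $W^\circ \neq V$, which should follow from smooth irreducibility applied correctly), yielding a strictly decreasing chain $\rk(V) > \rk(VJ) > \cdots$ that terminates yet must reach $0$ only when $VJ^k$ is finite, contradicting nilpotence forcing it to vanish against infiniteness; or directly show $W^\circ = 0$ and then argue that the finite group $V/W^\circ$ carries no room for $J(E)$ to act nontrivially on the connected $V$.

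An alternative and cleaner route I would keep in reserve avoids chains entirely: let $\soc(V)$ be the $E$-socle, i.e.\ the sum of all simple $E$-submodules, which is $E$-invariant and definable, and whose connected component is therefore $0$ or $V$ by smooth irreducibility. One shows $\soc(V)^\circ \neq 0$ (a connected module of finite Morley rank has a minimal connected submodule, which is simple over $E$), so $\soc(V)^\circ = V$, meaning $V$ is connected-ly socle, and then semisimplicity of $V$ as an $E$-module follows because $V$ equals its own socle up to a connected component; since $J(E)$ annihilates $\soc(V)$ and $\soc(V)$ is cofinite (indeed of full rank) in $V$, faithfulness of $E$ on $V$ forces $J(E) = 0$ and $E$ semisimple. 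I expect the genuine content to lie in establishing the descending chain condition for \emph{connected} definable submodules from finiteness of Morley rank, and in verifying that the socle is definable so that the smooth-irreducibility hypothesis can be applied to it.
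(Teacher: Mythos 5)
Your first route is the paper's strategy in substance: show that $V\cdot J$ (for $J=J(E)$ the radical) is a proper connected definable $E$-invariant subgroup, conclude $V\cdot J=0$ by smooth irreducibility, then $J=0$ by faithfulness of $E$ on $V$, and semisimplicity of $E$ and of $V$ follows. However, what you single out as ``the main obstacle'' --- that $W=V\cdot J$ might be disconnected, forcing a rank-descent bookkeeping that you sketch but never carry out --- is not an obstacle at all, and your proposal leaves the lemma genuinely unproved at exactly that point. The fix is immediate: $E$ is a finite-dimensional $\bF_p$-algebra, so pick a basis $x_1,\dots,x_k$ of $J$; then $V\cdot J = Vx_1+\cdots+Vx_k$, each $Vx_i$ is the image of the connected group $V$ under the definable endomorphism $v\mapsto vx_i$, hence definable and connected, and a finite sum of connected definable subgroups of the abelian group $V$ is again definable and connected. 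So $W$ is connected on the nose, no passage to $W^\circ$ is needed, and properness is equally direct: if $V\cdot J = V$, then by induction $V = V\cdot J^m$ for all $m$, and $J^n=0$ forces $V=0$, contradicting that $V$ is infinite. The paper packages the same computation group-theoretically: it forms the finite $p$-group $P=1+J$ acting definably on $V$, notes (implicitly, via $[v,1+x]=vx$) that $[V,P]=V\cdot J$, and quotes standard facts about such unipotent actions in the finite Morley rank category to get $[V,P]<V$ proper connected definable. Your module-theoretic phrasing is fine and arguably more self-contained --- but only once the connectedness observation above is supplied.

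Your reserve route through the socle contains a genuine error. You claim a minimal connected definable $E$-submodule of $V$ is simple over $E$; it is not. Since $E$ is literally a finite ring, every simple $E$-module is finite (it is cyclic, and $vE$ has at most $|E|$ elements), whereas any nonzero connected definable subgroup of $V$ is infinite, so no connected submodule is ever $E$-simple and the step $\soc(V)^\circ\neq 0$ does not follow as you argue. The socle $\soc(V)=\{\,v\in V : vJ=0\,\}$ is indeed definable (a finite conjunction of equations $vx_i=0$), but establishing that its connected component is nonzero before knowing $J$ acts trivially essentially reduces to the first argument anyway (e.g.\ via $V\cdot J^{k}\subseteq\soc(V)$ for the largest $k$ with $V\cdot J^{k}\neq 0$), so the detour buys nothing. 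Discard it and complete the first route as indicated.
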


\begin{proof}

We work in the notation of Fact \ref{fact:finite-algebras}. Since $VQ$ is a definable abelian-by-finite $p$-group, $VQ$ is nilpotent. By properties of commutators in  groups of finite Morley rank, $VJ =  [V, Q] < V$ is a proper connected definable subgroup  of $V$ invariant under $R$, hence $V J=0$. But then  $J=0$  and $R$ is semisimple, therefore $V$ is also semisimple, that is,  a direct sum of simple $R$-modules
\begin{equation} \label{eq:direct_decoposition}
V = \bigoplus_{\ell\in L} U_\ell \; \mbox{ for some index set } L.
\end{equation}
\end{proof}

\begin{theorem} \label{theorem:action_of_E} Under  Hypothesis \emph{\ref{hypo}},

\bi
\item[(a)] All simple factors in the direct sum of Equation  {\rm (\ref{eq:direct_decoposition})} are isomorphic.
\item[(b)] The algebra  $R$ is simple and therefore is isomorphic to an algebra of all matrices of size $\ell \times \ell$, for some $\ell$, over a finite extension of\/ $\bF_p$.
\ei
\end{theorem}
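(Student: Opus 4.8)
The plan is to derive both parts from Lemma~\ref{lemma:semisimple}, which tells us that $E$ is a finite-dimensional semisimple $\bF_p$-algebra acting \emph{faithfully} on the connected group $V$; the only additional ingredient is a single application of smooth irreducibility to a Peirce decomposition of $V$.

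I would first show that $E$ is simple. If it were not, then, being semisimple, $E$ would decompose nontrivially as a product of rings, and would therefore contain a central idempotent $e$ with $e \neq 0$ and $e \neq 1$. Since $e$ and $1-e$ lie in $E$ and the elements of $E$ act on $V$ as function symbols, the maps $v \mapsto ve$ and $v \mapsto v(1-e)$ are definable endomorphisms of $V$; their images $Ve$ and $V(1-e)$ are subgroups, and, because $e$ is central, they are in fact $E$-submodules, with $V = Ve \oplus V(1-e)$. As $V$ is connected and $Ve$, $V(1-e)$ are images of $V$ under definable homomorphisms, both are connected definable $E$-submodules, and smooth irreducibility forces each of them to equal $0$ or $V$. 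Since $V \neq 0$, exactly one of the two summands vanishes; but $Ve = 0$ says $e$ annihilates $V$, while $V(1-e) = 0$ says $1-e$ annihilates $V$, each contradicting faithfulness of the $E$-action since $e \neq 0$ and $1-e \neq 0$. Hence $E$ is simple.

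A finite-dimensional simple algebra over $\bF_p$ is, by the Artin--Wedderburn theorem together with Wedderburn's theorem on finite division rings, isomorphic to the algebra of $l \times l$ matrices over some finite extension $F$ of $\bF_p$, which gives (b). For (a), recall that a simple artinian ring has, up to isomorphism, a unique simple module; therefore in the decomposition $V = \bigoplus_{k \in K} U_k$ of $V$ into simple $E$-modules all the $U_k$ are isomorphic.

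The only place where the finite Morley rank hypothesis genuinely enters, and the only step requiring a moment's care, is the assertion that $Ve$ and $V(1-e)$ are \emph{connected}, which is what makes smooth irreducibility applicable to them: this holds because each is the image of the connected group $V$ under a definable endomorphism, and such images are connected. Everything else is routine ring and module theory, so I do not anticipate a real obstacle.
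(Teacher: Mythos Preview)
Your argument is correct. You prove (b) first by a Peirce decomposition: a nontrivial central idempotent $e$ in the semisimple algebra $E$ would yield the definable connected $E$-invariant summands $Ve$ and $V(1-e)$, and smooth irreducibility together with faithfulness forces a contradiction. Part (a) then drops out because a simple artinian ring has a single isomorphism class of simple modules. All the steps are sound, and your remark that connectedness of $Ve$ is the one genuinely model-theoretic point is exactly right.

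The paper takes the reverse route. It first proves (a) directly by showing that each isotypic component $U_I$ of $V$ is \emph{definable}: it writes down an explicit first-order formula, quantifying over the finitely many right ideals $A \lhd E$ whose quotient $E/A$ has all simple constituents in the class $I$, which picks out the $I$-cyclic vectors. Smooth irreducibility then forces $V=U_I$ for a single $I$, and (b) follows because the enveloping algebra on a module with a single isotypic component is simple. Your approach is shorter and avoids the explicit formula entirely; in effect you are using that the central idempotent projecting onto $U_I$ already lives in $E$, so that $U_I = Ve_I$ is automatically the image of a definable map and no formula needs to be written down. The paper's argument, on the other hand, makes the definability of the isotypic components explicit at the level of syntax, which some readers may find more transparent from a model-theoretic standpoint.
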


\begin{proof}
(a) Denote by $\mathcal{I}= \mathcal{I}(R)$ the set of isomorphism classes of simple $R$-modules; notice that the set $\mathcal{I}$ is finite.    We collect isomorphic simple factors and rewrite the direct sum  of simple $R$-modules in  Equation  (\ref{eq:direct_decoposition}) as
\begin{equation} \label{factors_isomorphic}
V = \bigoplus_{I\in \mathcal{I}} U_I.
\end{equation}
where all simple summands of $U_I$ belong to the isomorphism class $I$.

We want to prove that each submodule  $U_I$ is definable; then it will  follow from the smoothness of the action of $R$ on $V$ than $V = U_I$ for some $I$, that is, that all summands in (\ref{eq:direct_decoposition}) are isomorphic.

For that purpose,  an element $v\in V$  will be  called \emph{$I$-cyclic} for $I \in \mathcal{I}$ if all simple summands in the cyclic module $vR$ belong to $I$. If $u,v$ are $I$-cyclic elements of $V$, then $(u+v)R \leqslant uR + vR$, and all simple summands in $uR + vR$, and hence in $(u+v)R$, belong to $I$ and therefore $u+v$ is an $I$-cyclic element. It follows that the set of all $I$-cyclic elements in $V$ coincides with  $U_I$.

 Let us denote by $\mathcal{K}_I$ the set of all right ideals $K$ in $R$ such that all simple summands of the factor module $R/K$ belong to $I$. Then $U_I$ is defined by the formula
\[
\Phi(v) := \bigvee_{K\in  \mathcal{K}_I} \left(\left( \bigwedge_{k\in K} vk=0 \right) \wedge \left(\bigwedge_{l\in R\smallsetminus K} vl \ne 0 \right)  \right).
\]
This completes the proof of part (a).

(b) Now $R$ is also the enveloping algebra of its restriction to every simple summand of $V$ and is therefore simple.
\end{proof}

\subsection{The weight decomposition for a coprime action of a finite abelian group and the Multiplicity Formula} \label{sec:weight-decomposition}

Now we will focus our attention  temporarily on actions of finite abelian groups of orders coprime to $p$, and reformulate the previous results in more familiar terms, in this special case.

Let  $V$ be a connected $H$-module of characteristic $p>0$ with  $H$ a finite abelian group  of order coprime to $p$.

View $V$ with the action of $H$ as a module over the finite group algebra $A = \bF_p[H]$. Applying Maschke's Theorem to the action of $H$ on $A$ by multiplication, we see that $A$ is semisimple and is a direct sum of simple finite commutative algebras, that is, finite  fields (of course, of characteristic $p$).

We shall call a non-zero element $v\in V$ a \emph{weight element} if ${\rm Ann}_A(v)$ is a maximal ideal in $A$; equivalently, this means that $vA$ is  an irreducible $A$-module. It follows that if $F = A/{\rm Ann}_A(v)$ and $\lambda: A \longrightarrow F$ is the canonical homomorphism, then $vA$ is a $1$-dimensional vector space over $F$ (notice that $F$ may be bigger than $\bF_p$) and, for every $a \in A$,  we have $$va= \lambda(a) v,$$ where the left-hand side is  understood in the sense of a (right) $A$-module, and the right-hand side is a vector space over the field $F$. This justifies the homomorphisms $\lambda$ being called \emph{weights} of $A $.

Observe that when restricted to $H$, weights become characters of $H$, that is, homomorphisms from $H$ to the multiplicative group $\overline{\bF}_p^*$ of the algebraic closure $\overline{\bF}_p$ of the prime field $\bF_p$. Also it is easy to see that $F \simeq \bF_p[\lambda[H]]$, where $$\lambda[H] = \{\, \lambda(h): \; h \in H\,\}.$$

Obviously, if $u,v \in V$ are weight vectors for the same weight $\lambda$  then either $u+v=0$ or $u+v$ is a weight vector for $\lambda$. Hence all such vectors form a \emph{definable} $A$-submodule $V_\lambda \leqslant V$, and it follows from Maschke's Theorem that
\[
V = \bigoplus V_\lambda,
\]
where the direct sum is taken over all weights of $H$ on $V$. It follows that all weight spaces $V_\lambda$ are connected, and their total number does not exceed $n = \rk V$.
It is also useful to keep in mind that $V_\lambda$ is a vector space under the action of the finite field $F_\lambda = \bF_p[\lambda[H]]$. Observe further that $F_\lambda$ is the enveloping algebra for the action of the group $H$ on $V_\lambda$.

If we call $\rk V_\lambda$ the \emph{multiplicity} of the weight  $\lambda$ then, as one would expect, we have the following.

\begin{theorem}[Multiplicity Formula] \label{theorem:multiplicity}
The sum of multiplicities of weights of $H$ on $V$ equals $\rk V$.
\end{theorem}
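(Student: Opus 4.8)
The plan is to notice that the weight decomposition $V=\bigoplus_\lambda V_\lambda$ is a \emph{finite} direct sum of \emph{definable connected} subgroups, and then to apply the elementary additivity of Morley rank over such sums.

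First I would make the decomposition fully algebraic. Since $|H|$ is coprime to $p$, Maschke's Theorem (already invoked above) gives $R=\bF_p[H]\simeq \prod_\lambda F_\lambda$, a product of finitely many finite fields, with factors indexed by the weights $\lambda$; let $e_\lambda\in R$ be the corresponding orthogonal central idempotents, so that $\sum_\lambda e_\lambda=1$ and $e_\lambda e_\mu=0$ for $\lambda\ne\mu$. Each $e_\lambda$ acts on $V$ as a definable endomorphism, so $V_\lambda:=Ve_\lambda$ is a definable subgroup; as the image of the connected group $V$ under a definable homomorphism, it is connected. Orthogonality of the $e_\lambda$ together with $\sum_\lambda e_\lambda=1$ gives $V=\bigoplus_\lambda V_\lambda$, and this matches the weight-space description above: if $\mathfrak{m}_\lambda=\ker\lambda$ is the associated maximal ideal, then $1-e_\lambda\in\mathfrak{m}_\lambda$ and $e_\lambda\mathfrak{m}_\lambda=0$, whence $Ve_\lambda=\{v\in V:v\mathfrak{m}_\lambda=0\}$, which is precisely $\{0\}$ together with the weight vectors for $\lambda$.

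Next I would record the additivity of rank. If a definable connected group $W$ of finite Morley rank splits as $W=A\oplus B$ with $A,B$ definable, then the projection $W\to B$ with kernel $A$ is a definable surjective homomorphism, so $\rk W=\rk A+\rk (W/A)=\rk A+\rk B$. Applying this inductively to the finite decomposition $V=\bigoplus_\lambda V_\lambda$ yields
\[
\rk V=\sum_\lambda \rk V_\lambda,
\]
which is exactly the statement that the sum of the multiplicities $\rk V_\lambda$ equals $\rk V$.

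There is no serious obstacle here. The two facts that do the work — that each $V_\lambda$ is definable (it is the kernel of the definable endomorphism $1-e_\lambda$) and that only finitely many weights occur (the finite ring $R$ has only finitely many maximal ideals) — were already secured in the discussion preceding the statement, and granting them the conclusion follows from the standard behaviour of Morley rank on finite definable direct sums.
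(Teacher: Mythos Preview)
Your argument is correct and matches the paper's intent: the paper itself offers no proof beyond the remark that it is obvious, and what you have written is precisely the obvious additivity of Morley rank over the already-established finite definable direct sum $V=\bigoplus_\lambda V_\lambda$. Your use of the central idempotents $e_\lambda$ to re-derive definability and connectedness of the $V_\lambda$ is a clean alternative packaging of facts the paper had already recorded just before the statement.
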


This statement is called a theorem only because of its importance; its proof is obvious.

\subsection{Proof of Theorem \ref{th:lower bound}}

\begin{proof} To prove Theorem \ref{th:lower bound}, it would suffice to show that, for any definable and faithful smooth $X$-module $V$, $ \rk V \geqslant \dim_{\overline{\bF}_p} W$ for some faithful $\overline{\bF}_p[X]$-module $W$ on which $X$ acts faithfully.

So let $V$ be a definable and faithful smooth  $X$-module of smallest possible Morley rank and $R$ the enveloping algebra of this action.  By Theorem \ref{theorem:action_of_E},  $V$ is a direct sum of isomorphic  simple (in particular, finite) $R$-modules; obviously, $X$ acts on each of them faithfully.

Now we can look at one of these simple $R$-modules, say $U$. Assume that $\dim_{\bF_p}U = n$. By Theorem \ref{theorem:action_of_E}, $R$  is the matrix algebra ${\rm Mat}_{m\times m}(\bF_{p^l})$, where $n=m\times l$. In particular, we have a definable action of $R^*={\rm GL}_m(\bF_{p^l})$ on $U$, and, since $V$ is a direct sum of isomorphic copies of $U$, $R^*$ acts definably on $V$.  The maximal torus $H$ of $R^*$ has $m$ different weights on $U$ and therefore on $V$. From the Multiplicity Formula (Theorem \ref{theorem:multiplicity}) applied to the action of  $H$ we have that
\[
\rk V \geqslant m.
\]
But $R^*$, and hence the (homomorphic) image of  $X$ in $R^*$ has a faithful linear representation over $\overline{\bF}_p$ of degree $m$. Hence $\rk V \geqslant d_p(X)$, which completes the proof of Theorem \ref{th:lower bound}.
\end{proof}

\subsection{Theorem ~\ref{th:lower bound}: comments}

It looks as though the proof of  Theorem \ref{th:lower bound} does not use all axioms of finite Morley rank (as given in \cite[Section I.2.1]{ABC-book} and \cite[Section 4.1.2]{BN-book}); it would be interesting to find weaker conditions on the module $V$ under which Theorem \ref{th:lower bound} still holds, in a way similar to that of \cite{corredor-deloro-wiscons}.

I expect that the method outlined here gives also new approaches to some of the problems listed in the survey paper by Adrien Deloro and myself \cite{borovik-deloro}.

\section{Proof of Theorem \ref{th:-corollary-via-LP} and Jordan properties}
\label{sec:LP}

\subsection{A Larsen and Pink type theorem}

\begin{fact} \emph{(Larsen and Pink \cite[Theorem 0.2]{Larsen-Pink2011}) }\label{fact:LP}
For every $n$ there exists a constant $J '(n)$ depending only on $n$ such that any finite subgroup $\Gamma$  of ${\rm GL}_n$ over any field $k$ possesses normal subgroups
$\Gamma_3 \leqslant \Gamma_2 \leqslant \Gamma_1$ such that
\bi
\item[(a)] $|\Gamma : \Gamma_1|   < J'(n)$.
\item[(b)] Either $\Gamma_1= \Gamma_2$,  or $p := {\rm char}(k)$ is positive and  $\Gamma_1 / \Gamma_2$ is a direct product of
finite simple groups of Lie type in characteristic $p$.
\item[(c)] $\Gamma_2 / \Gamma_3$ is abelian of order not divisible by ${\rm char}(k)$.
\item[(d)] Either  $\Gamma_3 = \{1\}$  or $\ p := {\rm char}(k)$ is positive and $\Gamma_3$ is a $p$-group.
\ei
\end{fact}

Theorem \ref{th:lower bound} and Fact \ref{fact:LP} immediately yield the following result.

\begin{theorem} \label{th:version-of-LP} There is a function
\[
J: \mathbb{N} \longrightarrow \mathbb{N}
\]
with the following property:

If $X$ is a finite group which acts definably and faithfully on an infinite connected elementary abelian $p$-group of Morley rank $n$ then $H$
possesses normal subgroups
$X_3 \leqslant X_2 \leqslant X_1$ such that
\bi
\item[(a)] $|X : X_1|   <  J(n)$.
\item[(b)] Either $X_1= X_2$,  or   $X_1 / X_2$ is a direct product of finite simple groups of Lie type in characteristic $p$.
\item[(c)] $X_2 / X_3$ is abelian of order not divisible by $p$.
\item[(d)] $X_3$   is a $p$-group.
\ei
\end{theorem}

Now Theorem \ref{th:-corollary-via-LP} is a special case of Theorem \ref{th:version-of-LP}. \hfill $\square$

\subsection{Theorems of Jordan type} \label{Jordan-type}

Fact \ref{fact:LP}  and Theorem \ref{th:version-of-LP}  can be called \emph{theorems of Jordan type} since they follow the paradigm set by Camille Jordan in his famous theorem of 1878:

\begin{fact} \cite[p. 114]{Jordan1878}

There is a function
\[
J: \mathbb{N} \longrightarrow \mathbb{N}
\]
with the following property: every  finite subgroup of ${\rm GL}_n$ over a field of characteristic zero possesses an abelian normal subgroup
of index $\leqslant J(n)$.
\end{fact}

Breuillard \cite{Breuillard2022} gave an exposition  of Jordan's original proof in the modern terminology; Collins \cite{Collins2007} found the optimal explicit bound for $J(n)$.

The \emph{Introduction} to Guld \cite{Guld2020} contains an impressive survey of theorems of Jordan type for finite subgroups of groups arising in  complex algebraic geometry and in differential geometry. Due to this assumption, \cite{Guld2020} contains a more specific and narrower definition of a \emph{Jordan group}:
\bq
A group $G$ is called \emph{Jordan}, \emph{solvably Jordan} or \emph{nilpotently Jordan
of class at most} $c$  ($c \in\mathbb{N}$) if there exists a constant $J \in \mathbb{N}$ such that every finite
subgroup $X \leqslant  G$ has a subgroup $Y \leqslant X$ such that  $|X : Y| \leqslant  J$ and $Y$ is abelian,
solvable or nilpotent of class at most $c$, respectively.
\eq

\begin{fact} \cite[Theorem 2]{Guld2020} \label{fact:Guld} The birational automorphism group of a variety over a field of characteristic zero is nilpotently Jordan of class at most two.
\end{fact}

There are several variations of definitions of Jordan groups, so it could be more useful to speak about all of them as \emph{Jordan properties}.
Bandman and Zarhin \cite{Bandman-Zarhin2023} is a survey of results on Jordan properties in automorphism groups of some structures of K\"{a}hler geometry. Some results on Jordan properties in positive characteristics and further references can be found  in  \cite{Chen-Shramov2022,Prokhorov-Shramov2022}.

These results, together with many other results quoted in \cite{Guld2020}, create a feeling that there could be some underlying model-theoretic concepts and results underpinning them all.

\section{Linearisation of the actions of algebraic groups} \label{sec:linearity}

In this section, we will prove  Theorems \ref{fact:intermediate-subgroup},    \ref{th:linearity}, and  \ref{th:linearity-2}.

\subsection{Proof of Theorem \ref{fact:intermediate-subgroup}.}  \label{sec:intermediate-subgroup}

We use definitions and terminology of Section \ref{sec:intermediate-preliminaries}.

\bigskip
\noindent
\textbf{Theorem \ref{fact:intermediate-subgroup}.}
\emph{Let $K$ be an algebraically closed field of characteristic $p > 0$ and $K_\infty$  the algebraic closure of the prime field\/ $\mathbb{F}_p$ in $K$. Let $G$ be  a semisimple algebraic group over $K$ and $G_\infty$  the group of points of $G$ over $K_\infty$. If  $M$ is  subgroup of $G$ containing $G_\infty$ and the structure $(G,M)$ has finite Morley rank, then $M=G$.}

\begin{proof}
 It obviously suffices to consider only the case when $G$ is simple.  Let $T_\infty$ be a maximal torus in $G_\infty$. Its Zariski closure in $G$ is a maximal torus in $G$, let us denote it $T$. Obviously,
\[
T_\infty  \leqslant M \cap T \leqslant T,
\]
with $M \cap T$ being a definable subgroup. By Poizat \cite{poizat1988} the simple algebraic group $G$ and the field $K$ are be-interptetable. This allows us to apply \cite[Proposition I.4.20 and Lemma I.4.21]{ABC-book}, and prove that $T$ is a \emph{good torus} in the sense of  \cite[Section I.4.4] {ABC-book}, that is, every definable subgroup of $T$ is the definable hull of its torsion part. In particular,  $T$ is the definable hull of its torsion $T_\infty$.   Hence $M\cap T = T$ which means that $T \leqslant M$.
Define
\[
N = \langle T^m \mid m\in M \rangle.
\]
Being generated by Zariski closed connected subgroups, $N$ is Zariski closed. Obviously, $G_\infty = \langle T_\infty^g \mid g\in G_\infty  \rangle \leqslant N$. But the Zariski closure of $G_\infty$ in $G$ is $G$, hence $N=G$ and therefore $M=G$.
\end{proof}

\medskip
\noindent
\textbf{Remark.} The bi-interpretability of $G$ and $K$ is the cornerstone of the proof. Indeed, if $K$ is an algebraically closed field and $T = K^*$ is its multiplicative group, then the torus $T$, \emph{viewed as a pure group}, in absence of the field $K$, is not a good torus: it is easy to see that the structure $(T_\infty,T)$ has finite Morley rank.

\subsection{Proof of Theorem  \ref{th:linearity}}  \label{sec:linearity-proof}  This is the core of the paper.

\medskip
\noindent
\textbf{Theorem \ref{th:linearity}}
\emph{Let $K$ be an algebraically closed field of characteristic $p >0$ and $G$ be  a connected algebraic group over $K$. Assume that $G$ acts definably on an infinite connected elementary abelian  $p$-group $V$ of finite Morley rank. Assume that this action is definably irreducible. Then the following statements are true.
\ben
\item The group $V$ has a structure of a finite dimensional $K$-vector space compatible with the action of $G$, so the group $G$ could be viewed as a subgroup of $\mathop{GL}(V)$.
\item  Let $\widehat{G}$ be a simply connected simple algebraic group over $K$ covering $G$. Then $ \rho: \widehat{G} \longrightarrow G \hookrightarrow \mathop{GL}(V)$ is an irreducible $K$-linear representation of the  group $\widehat{G}$ on  $V$.  There are
irreducible \emph{rational} representations $\omega_1, \dots, \omega_m$ of the group $\widehat{G}$, and
there are
$(V\rtimes G)$-definable automorphisms $\varphi_1, \dots, \varphi_d$ of the field $K$,
such that $\rho = \bigotimes_{i = 1}^d \varphi_i\omega_i$. In particular, the representation $\rho$ is $(V\rtimes G)$-definable.
\een}

The proof of this theorem will spread over Sections \ref{sec:linearisation}
--\ref{Proof-Theorem-4(2)}.

It is useful to remember the ranked universe convention of Section \ref{sec:Universe}.

\medskip

\subsection{Linearisation Theorem} \label{sec:linearisation}

Recall that if $G$ is a group of finite Morley rank acting definably  on an abelian group $V$  of finite Morley rank, then the action is called \emph{definably irreducible} if the only $G$-invariant definable subgroups in $V$ are $0$ and $V$.

\begin{fact} [Linearisation Theorem]
\label{Fact:DW}
 Let $V$ be an infinite elementary abelian $p$-group of finite Morley rank and $G$ an infinite group of finite Morley rank acting on $V$ faithfully, definably, and definably irreducibly. Let $D$ be the ring of all definable endomorphisms of\/ $V$ and  $Z=C_D(G)$.  Assume that $Z$ is infinite. Then

\ben

\item  $Z$ is an algebraically closed  field definable in $V\rtimes G$  and the action of $Z$ on $V$   gives $V$ a structure of a finite dimensional $Z$-vector space (with a $Z$-linear action of $G$).

\item The enveloping algebra \emph{(}over $Z$\emph{)} $R =R(G)$ is the full matrix algebra ${\rm End}_Z(V)$.
\item $R$ is definable in $V\rtimes G$.
\een
\end{fact}

\begin{proof}
Clause (1) is a result by  Macpherson and Pillay \cite[Theorem 1.2]{MPPrimitive}, with a more complete proof given by Deloro  in  \cite{Deloro-Bogota}. It also follows from a more general and very illuminating treatment of linearisation of actions of finite Morley rank given in Deloro's ``Zilber's  skew field lemma'' \cite{Deloro2022}.
Clause (2) follows from basic algebra: an irreducible subgroup $G \leqslant {\rm GL}_n(Z)$  contains $n^2$ matrices linearly independent over $Z$ and forming a basis of the matrix algebra $M_{n\times n}(Z)$. Clause (3) follows from (1).
\end{proof}

\subsection{Groups of units of associative algebras over finite fields: ranks}

Recall that, for a prime number $r$, the $r$-rank $m_r(G)$ of a finite group $G$ is the minimal number of generators in a maximal elementary abelian $r$-subgroup of $G$. When $p>2$ we are interested in the case $r=2$ because elements of order $2$ in ${\rm GL}_n(\mathbb{F}_p)$ have eigenvalues $\pm 1$ which, of course, belong to $\mathbb{F}_p$ and therefore have a nice and easy to control behaviour.  When $p=2$, we use $r=3$, since this still gives us some degree of control. The two clauses (a) and (b) in the following Fact \ref{fact:finite-algebras-ranks} correspond to the cases when elements of order $2$ are semisimple ($p >2$) or unipotent (p=2).

\begin{fact} \label{fact:finite-algebras-ranks}  Let $R$ be a   finite-dimensional associative algebra over a finite field $\mathbb{F}_p$  of prime order $p$ and $J$ its radical. In the notation of Fact~\emph{\ref{fact:finite-algebras}}, the following hold.

\bi
\item[(a)] Assume that $p >2$.  Then
\[
m_2(R^*) = d_1+ d_2  +\cdots + d_k.
\]

\item[(b)] If $p=2$ then
\[
m_3(R^*) \leqslant \floor* {\frac{ d_1}{2}} +  \floor* {\frac{ d_2}{2}}  +\cdots + \floor* {\frac{ d_k}{2}}.
\]

\ei

\end{fact}

\begin{proof}  It is easy: for proving (a),  perhaps a simple reference to the proof of Theorem \ref{theorem:multiplicity} above would suffice. For (b), an proof follows from an elementary fact from the finite  group theory:  a cyclic group of order $3$ has only one faithful irreducible representation over the field $\mathbb{F}_2$, and it is of dimension $2$.
\end{proof}

\subsection{Proof of Theorem \ref{th:linearity} part (1)}

We start with a few general observation. If the connected algebraic group $G$ has a non-trivial unipotent radical $U\ne 1$, then $C_V(U) \ne 0$ is a proper definable $G$-invariant subgroup of $V$, which contradicts the assumptions of the theorem. Hence $U=1$ and $G$ is reductive. Set $Z= Z(G)$. If $Z$ is infinite then the theorem follows from Fact \ref{Fact:DW}. So we can assume without loss of generality that $G$ is semisimple.

Let $K_\infty\leqslant K $ be the algebraic closure of the prime field $\bF_p$, $G_\infty = \mathbb{G}(K_\infty)$, and $R_\infty = E(G_\infty)$ the enveloping algebra of $G_\infty$.

\begin{lemma} \label{blast-from-the-past} $R_\infty$ is the matrix algebra ${\rm M}_{d\times d} (K_\infty)$ for some natural number $d$.
\end{lemma}

\bigskip

\subsection{Proof of Lemma~\ref{blast-from-the-past}}

For a subgroup $H \leqslant G_\infty$, we denote by $^{u}\!H$ the subgroup generated in $H$ by all unipotent elements in $H$.

 We analyse the series of subgroups in $G_\infty$:
\[
X_k ={^{u}}\!G(\mathbb{F}_{p^{(m+k)!}}), \;  k = 1,2, \dots
\]
Obviously, they form a chain
\[
X_1 < X_2  < X_3 < \dots .
\]
and
\[
\bigcup_{i=1}^\infty X_i= G_\infty
\]
(since $G_\infty$  is generated by unipotent elements).

It is well-known that the groups $X_k$, $k = 2,3, 4, \dots$ (that is, with the exception of a few very small groups), are perfect, $X_k = X'_k$, and therefore

\bi
\item[(a)] The groups $X_k$ for $k > 1$ have no nontrivial characters $X_k  \longrightarrow \overline{\bF}_p^*$.
\ei

Let $R_k = E(X_k)$ be the enveloping algebra of $X_k$ in its action on $V$, then
\[
R_1 \leqslant R_2 \leqslant R_3 \leqslant \dots\]
and
\[
R_\infty = \bigcup_{i=1}^\infty R_i= E(G_\infty).
\]

Denote by $J_i$ the Jacobson radical of $R_i$,  $i =1, 2,\dots$. Then $R_i/J_i$ is semi-simple and
\[
R_i/J_i = M_{i1} \oplus \cdots \oplus M_{ik_i}
\]
where $M_{ij}$ are matrix algebras over finite fields $F_{ij}$ of degree $d_{ij}$. Notice that, in view of claim (a) above,  $d_{ij} > 1$  for $i>1$,  since $X_i$ have no non-trivial actions in dimension $1$.

For $p >2$  notice further that   $d_{ij}$ coincides with the $2$-rank of the corresponding group ${\rm GL}_{d_{ij}}(F_{ij})$ of invertible elements (Fact~\ref{fact:finite-algebras-ranks} (a)). Therefore

\bi
\item[(b)] For $p>2$, each group $R^*_i$ contains an elementary abelian $2$-subgroup of $2$-rank
    \[ d_i = d_{i1}+d_{i2}+\cdots +d_{ik_i}.\]
\ei

Again applying Fact~\ref{fact:finite-algebras-ranks} (a) we see that for $p>2$  the $2$-ranks $d_i$ are bounded by $\rk V$ in view of Theorem \ref{theorem:multiplicity}; for $p=2$  we similarly have, from Fact~\ref{fact:finite-algebras-ranks} (b),  that $d_i \leqslant 3\rk V$.   Hence

\bi
\item[(c)] The numerical parameters: $d_i$, $k_i$, $d_{i1},d_{i2},\dots, d_{ik_i}$ stabilise  starting from some $i_*$ as $i$ grows  and remain the same for all $i  \geqslant  i_*$.
    \ei

From that index  $i_*$ on, embeddings $R_i \leqslant R_j$  for $i < j$ can be much better controlled. Indeed,

\bi
\item[(d)] After appropriately changing the numeration we have embeddings of rings $M_{il} \leqslant M_{jl}$ for $i_* \leqslant i < j$ where the dimensions $d_{il}$ and $d_{jl}$ of $M_{il}$ and    $M_{jl}$, correspondingly, over their centers, correspondingly, are equal: $d_{il} = d_{jl}$.
\ei

This leads to
\bi
\item[(e)]  $J_i \leqslant J_j$ for $i_* \leqslant i < j$.
\ei
Indeed, if $J_i \not\hspace{-0.1ex}\leqslant J_j$ then $Q_i \not\hspace{-0.1ex}\leqslant Q_j$ and in one of the factor rings $M_{jl}$ the image $Q  = \overline{Q}_i \ne 0$. But $\overline{Q}$ is normalised by invertible elements of $M_{il}$. Let us focus on groups of units and  denote $d_{il}=d_{jl} = m$. Since the groups $X_i$ and $X_j$ are perfect,  we see a group $H_i =\SL_m(K_i)$ (which contains $X_i$)  imbedded into $H_j=\SL_m(K_j)$ (which contains $X_j$), where $K_i$ and $K_j$ are two finite fields of characteristic $p>0$, and  $H_i$ is normalising a non-trivial $p$-subgroup $Q$ in $H_j$, that is, $H_i \leqslant N_{H_j}(Q)$ and is contained in a proper parabolic  subgroup of $H_j$ which is obviously impossible: $H_i$ and $H_j$ have the same Lie rank $m-1$, but proper parabolic subgroups in $H_j$ have smaller Lie rank.

Now we have to take a look at the action of $J_i$ on $V$.
Obviously  $VJ_i = [V,Q_i]$. The group $V\rtimes Q_i$ is nilpotent and $V$ is its connected component and therefore, by standard properties of nilpotent groups of finite Morley rank, $[V,Q_i]$ is a connected definable proper subgroup of $V$. Hence

\bi
\item[(f)]  $VJ_i$ is a connected definable proper subgroup of $V$.
\ei
Now we immediately have
\bi
\item[(g)] $J_\infty =  \bigcup_{i=i_*}^\infty J_i$
is  a nilpotent ideal of $R_\infty$. Moreover, $VJ_\infty$ is a connected definable proper subgroup of $V$.
\ei

If $W = VJ_\infty \ne 0$ then $W$ is a definable proper connected $R_\infty$-invariant, and hence $G_\infty$-invariant, subgroup of $V$.
 But then $N_G(W)$ is a definable subgroup of $G$, and, of course, contains $G_\infty$; hence, by Theorem \ref{fact:intermediate-subgroup}, $N_G(W) = G$, which contradicts  irreducibility of $G$ on $V$. This proves

\bi
\item[(h)]  $J_\infty = 0$
\ei

We can now complete the proof of the lemma.  By Steps (h) and (d), $R_\infty$ is semisimple and
    \[
    R_\infty  = M_1\oplus \cdots \oplus M_k
    \]
is the direct sum of matrix algebras of degrees $d_j$  over the field $K_\infty$.

Assume that $k >1$. then
\[
V = VM_1 \oplus \cdots\oplus VM_k,
\]
where each submodule  $VM_j$ is annihilated by $M_l$ for $l \ne j$, and, moreover,
\[VM_J = \bigcap_{l\ne j}\mathop{\rm Ann}_V(M_l).
\]
By the chain condition,   $VM_j$ is the annihilator of a finite set of matrices, therefore it is  definable. Moreover,  $VM_j$ is normalised by $G_\infty$, therefore  $G_\infty \leqslant N_G(VM_j)$ and the latter is a definable subgroup, which again leads to a contradiction with Theorem \ref{fact:intermediate-subgroup} and irreducibility of $G$ on $V$. Hence $k=1$ and $R_\infty = M_{d\times d}(K_\infty)$. \hfill $\square$

\bigskip

\subsection{Back to proof of Theorem \ref{th:linearity} part (1)} \label{sec:action-of-Z}

Now we can use Lemma \ref{blast-from-the-past}. Let $Z_\infty \simeq K^*_\infty$ be the center of $R_\infty$.  Take $z \in Z_\infty$, $z \ne 0, 1$.  Then
    \[
z = g_1 + \cdots + g_n
    \]
 for some  $g_i \in G_\infty$. Some elements  $g\in G$ `commute' with $z$ in the sense that
 \begin{equation} \label{eq:commutativity-is-defianble}
 vzg = vgz   \quad\mbox{ for all} \quad v \in V.
 \end{equation}
 We denote by $M$ the set of such elements in $G$; it is easy to see that this is a subgroup.
And here is the  key observation: Equation \ref{eq:commutativity-is-defianble} can be written as a first order statement in the group language in the group $V \rtimes G$:
 \[
vg_1g+\cdots + vg_ng =  vgg_1 +\cdots + vgg_n   \quad\mbox{ for all} \quad v \in V.
\]
 Now the subgroup $M$  is definable in $V \rtimes G$, and contains $G_\infty$, so we have the following chain of subgroups:
 \begin{equation} \label{eq:intermediate1}
 G_\infty \leqslant M \leqslant G.
 \end{equation}
 The group $G$, being a semisimple linear algebraic over $K$, is decomposed as a central product
 \[
 G = G^1*\cdots * G^\ell
 \]
 of simple algebraic groups $G^i$ over $K$, $i = 1,\dots, \ell$. If $G^i_\infty$ is the group of points of $G^i$ over the field $K_\infty$, then
 \[
 G^i_\infty = G^1_\infty * \cdots * G^\ell_\infty
 \]
 and $M_i = M \cap G^i$ and in every $G^i$ we have a chain of subgroups
 \[
  G^i_\infty \leqslant M_i \leqslant G^i.
 \]

Now, Theorem \ref{fact:intermediate-subgroup} gives us $M_i = G^i$ for all $i$, hence $M=G$. Recall that the subgroup $M$ was constructed from some element $z \in Z$.
This arguments applies to all $z \in Z_\infty$, hence $Z_\infty$ and $G$ commute elementwise as multiplicative subgroups of $\mathop{\rm End} V$. The application of Fact \ref{Fact:DW} completes the proof of Theorem \ref{th:linearity}(1). \hfill $\square$

\subsection{Theorem \ref{th:linearity} part (2): some preparatory comments} \label{Proof-Theorem-4(2)-preparations}

We continue to work in notation of  Theorem \ref{th:linearity} but need additional definitions and facts about simple algebraic groups over algebraically closed fields.

If $G$ and $H$ are two simple algebraic groups over an algebraically closed field $K$ of characteristic $p >0$ which is fixed in this Section, a surjective rational homomorphism  $\zeta: G \longrightarrow H$ is called an \emph{isogeny}. It is known that $ker\, \zeta \leqslant Z(G)$, is finite and has order  coprime to $p$. Among simple algebraic groups of the same type as $G$, that is,  with the same root system, there is the group $\widehat{G}$, called \emph{simply connected}, which has a isogeny onto any other simple group of the same type, and the \emph{adjoint} group $\bar{G}$ such that any group of the same type has an isogeny onto $\bar{G}$.

Let $\Phi$ be the root system of $G$; we can select root subgroups
\[
X_r = \{x_r(t),\; t \in K\}, \; r \in \Phi,
\]
so that all of them are  defined over the prime field. An isogeny $\zeta: G \longrightarrow H$ maps the system of root subgroups $\{ X_r, \; r\in \Phi\}$, to a similar system in $H$.

If  now $\varphi \in \mathop{\rm Aut} K$ is an automorphisms of the field $K$ then it induces a \emph{field} \emph{automorphism}  of $G$ by mapping elements of root subgroups of $G$,
\[
x_r(t) \mapsto x_r(t^\varphi), \quad r\in \Phi, \quad t \in K.
\]
This gives us an automorphism $\widetilde{\varphi}$ of $G$, and similarly for $H$. Obviously, the $\widetilde{\varphi}$ commute with the isogeny, $\widetilde{\varphi}\zeta = \zeta\widetilde{\varphi}$ which justifies the use of the notation $\widetilde{\varphi}$ on the both groups $G$ and $H$. Also, the action of $\widetilde{\varphi}$ on the elements of the root subgroup $X_r = \{x_r(t),\; t \in K\}$ is the same as the action of $\varphi$ on the elements of $K$. Therefore if $\widetilde{\varphi}$ is a definable automorphism of $G$, then $\varphi$ is a definable automorphism of $K$.

The automorphism $\widetilde{\varphi}$ can be used to convert each $G$-module $W$ into another $G$-module, denoted $W^\varphi$, by the rule $wg := w(g^{\widetilde{\varphi}})$, $w\in W$, $g \in G$ \cite[\S 5]{Steinberg1963}.

\subsection{Proof of Theorem \ref{th:linearity} part (2)} \label{Proof-Theorem-4(2)}

\begin{proof} We know from part (1) of Theorem \ref{th:linearity}  that $V$ is a finite dimensional vector space over the field $K$ and $G$ is a definable subgroup in  $\mathop{\rm GL}_K(V)$.

Let $\widehat{G}$ be a simply connected simple algebraic group over $K$ and $\rho: \widehat{G} \longrightarrow G$ is an isogeny. Then  $\rho$ is an \emph{abstract homomorphism},   in the sense of the famous \emph{Homomorphismes``abstraits''} paper by Borel and Tits \cite{Borel-Tits1973},  from $\widehat{G}$ to  $\mathop{\rm GL}_K(V)$,
\[
\rho:  \widehat{G} \longrightarrow \mathop{\rm GL}(V).
\]

By \cite[Corollary 10.4]{Borel-Tits1973} (compare with \cite{Steinberg1963}), $\rho$ is equivalent to a tensor product
\[
\varphi_1\omega_1 \otimes \cdots \otimes \varphi_m\omega_m
\]
where $\omega_i$ are rational irreducible representations of $\widehat{G}$ and $\varphi_i$ are automorphisms of  $\widehat{G}$ induced by automorphisms of the field $K$.

Now all that we have to prove is that all field automorphisms $\varphi_i$ are definable in $V \rtimes G$. For that, we have to switch from the representation-theoretic language to the group-theoretic one.

Denote by $\epsilon$ the $1$-dimensional trivial representation $\epsilon: \widehat{G} \longrightarrow \mathop{\rm GL}_1(K)$.  Set $d_i = \dim_K  \omega_i$,  then, by properties of tensor products,  $d_1 \cdots d_m = n$, where $n= \dim_K(V)$. As usual, define $d_i\epsilon = \epsilon \oplus \cdots \oplus  \epsilon$ ($d_i$ times).   Finally, set
\[
\rho_i = d_1\epsilon \otimes \cdots \otimes d_{i-1}\epsilon \otimes  \varphi_i \omega_{i} \otimes d_{i+1}\epsilon \otimes \cdots \otimes d_m\epsilon.
\]
The representations $\rho_i$ are defined up to equivalence of representations, but they can be replaced by equivalent ones in a way that makes the  images $G_i$ of $\rho_i$
pairwise commuting, $[G_i, G_j] = 1$ for $i \ne j$. Set $\breve{G} = G_1\cdots G_m$. Observe that $G\leqslant \breve{G}$. In particular, the group  $\breve{G}$ acts on $V$ irreducibly. Also, the centres $Z(G_i) < Z(\mathop{\rm GL}(V))$ consist of scalar matrices. Let $\zeta: \mathop{\rm GL}(V) \longrightarrow \mathop{\rm PGL}(V)$ is the canonical homomorphism; replacing all our groups by their images in $\mathop{\rm PGL}(V)$ simplifies the arguments. The steps of this reduction are shown on diagram (\ref{Eq:bypass})

\begin{equation}\label{Eq:bypass}
\begin{diagram}
&\widehat{G} & \rdTo^{\varphi_i\omega_i}&& \mbox{simply connected cover of } G \\
&\dTo^{\rho=\bigotimes \widetilde\varphi_i\omega_i}      &\rdTo^{\rho_i}&&     \mbox{representations}                 \\
&G   && G_i   & \mathop{\rm GL}(V)                          \\
&\dTo^{\zeta} & &\dTo^{\zeta} & \mathop{\rm GL}(V) \longrightarrow \mathop{\rm PGL}(V)  \\
&\bar{G} &\rTo^{\widetilde{\varphi}_i\bar{\omega_i}} & \bar{G}_i  & \mathop{\rm PGL}(V)  \\
 &&    \rdTo^{\pi_i}                                        & \dTo^{\rm Id}  & \mbox{projections}~\bar{G} \longrightarrow \bar{D} \\
&&                                  & \bar{G}_i  &  \mathop{\rm PGL}(V)
\end{diagram}
\end{equation}
\noindent
and explained in detail below.

We are moving now into the setup of \cite[Theorem 10.3]{Borel-Tits1973} and denote by $\bar{\rho}$ and $\bar{\rho}_i$ the induced homomorphisms
\[
\bar{\rho} = \rho\cdot\zeta: \widehat{G} \longrightarrow \mathop{\rm PGL}(V), \qquad \bar{\rho}_i = \rho_i\cdot \zeta: \widehat{G} \longrightarrow \mathop{\rm PGL}(V)
\]
We denote by $\bar{G}$ and $\bar{G}_i$ the images of groups $G$ and $G_i$ in $\bar{P}= \mathop{\rm PGL}(V)$. Since $\bar{G}_i =G_i/Z(G_i)$ are simple groups,
the commuting product $\bar{G}_1 \cdots \bar{G}_m$ is a direct product,
\[
\bar{G}_1 \cdots \bar{G}_m = \bar{G}_1 \times \cdots\times  \bar{G}_m.
\]
Now we take the double centraliser closures of groups $\bar{G}_i$, setting  $\bar{D}_i = C_{\bar{P}}(C_{\bar{P}}(\bar{G}_i))$, then $\bar{G}_i \leqslant \bar{D}_i$ and the groups $\bar{D}_i$ form a direct product
\[
\bar{D}= \bar{D}_1 \times \cdots \times \bar{D}_m.
\]
It could be shown that $\bar{D}_i \simeq  \mathop{\rm PGL}_{d_i}(K)$, but we will not be using this fact. What  matters for us is that $\bar{D}_i$  are definable in $\mathop{\rm PGL}(V)$ hence in $V\rtimes G$.

The final step in the proof starts with an observation that  $\bar{G}\leqslant \bar{D}$. The group $\bar{G}$ is of course definable, hence the projection maps
$\mathop{\pi}_i: \bar{G} \longrightarrow \bar{D}_i$ are definable in $V\rtimes G$. The image of $\pi_i$ is $\bar{G}_i$, and the triangle at the bottom of the diagram \ref{Eq:bypass} is commutative, $\pi_i = \widetilde{\varphi}_i\bar{\omega}_i$ and $\widetilde{\varphi}_i =\pi_i\bar{\omega}_i^{-1}$. Hence the field automorphism  $\widetilde{\varphi}_i$ of the group $\bar{G}$ is definable in $V\rtimes G$, hence the automorphism $\varphi$ of the field $K$ is also definable in $V\rtimes G$.
\end{proof}

\subsection{Theorem  \ref{th:linearity} part (2): an example} \label{sec:example} Let $\widehat{G}=
\mathop{\rm SL}_2(K)$, where $K$ is an algebraically closed field of characteristic
$p>2$, $\omega$ the canonical $2$-dimensional representation of $\widehat{G}$ over $K$  and $\tilde{\varphi}$ its field automorphism induced by $\varphi\in \mathop{\rm Aut} K$. Let $V$ be the space of the representation $\omega \otimes \tilde{\varphi}\omega$, then $\dim_K V = 4$. Using the usual notation $I_n$ for the identity linear transformation of $K^n$, we see that the image in $\mathop{\rm GL}(V)$ of the central element $-I_2$ of $\widehat{G}$ is
\[
-I_2\otimes -I_2^{\tilde{\varphi}} = -I_2\otimes -I_2 = I_4,
\]
which means that the image of $\widehat{G}$ in $\mathop{\rm GL}(V)$ (we denote it $G$) is isomorphic to $\mathop{\rm PSL}_2(K)$.  If we now move to $\mathop{\rm PGL}(V)$, retaining notations from the proof, we see that $\bar{G}$ now is a subgroups in $\mathop{\rm PSL}_2(K) \times \mathop{\rm PSL}_2(K)$ and that it happens to be exactly the graph of the field automorphism $\tilde{\varphi}: \mathop{\rm PSL}_2(K) \longrightarrow \mathop{\rm PSL}_2(K)$.

If we now look only at the group $V \rtimes G \simeq K^4 \rtimes \mathop{\rm PSL}_2(K)$, it would be difficult to find its representation-theoretic origins without invoking the simply connected cover $\widehat{G}$ of the group $G$.

The proof of the theorem in this special case contains an interesting little detail which sheds light at the situation in general: we are proving the definability of  the automorphism $\tilde{\varphi}$ by proving the definability of its graph  as a subgroup. This is a cute tiny self-evident  fact from elementary algebra which \c{S}\"{u}kr\"{u} Yal\c{c}\i nkaya and myself could not find in any textbook but which we systematically  use in all our work on black box algebra \cite{BY2018}:
\bq
\emph{A map $\varphi: G \longrightarrow H$ from a group $G$ to a group $H$ is a homomorphism if and only if its graph $\Gamma_\varphi \subset G\times H$ of $\varphi$  is a subgroup of $G\times H$.}
\eq
The same is of course true for rings and all other kinds of algebraic systems.  This is one of many examples of exchange of ideas between the theory of black box groups and the theory of groups of finite Morley rank -- see also Section \ref{sec:black-box-groups} for further discussion.

\subsection{Proof of Theorem \ref{th:linearity-2}} \label{sec:linearity-2}
The following result about solvable groups of finite Morley rank is an adaptation of the method of proof of Theorem \ref{th:linearity}(1).

\medskip

\textbf{Theorem \ref{th:linearity-2}.}  Let $G$ be an infinite solvable--by-finite group of finite Morley rank which acts faithfully and  definably  on a connected elementary abelian  $p$-group $V$ of finite Morley rank. Assume that this action is definably irreducible. Then $G^\circ$ is a good torus and  $V$ has a definable structure of a finite dimensional $K$-vector space compatible with the action of $G$, with the field $K$ definable in $V\rtimes G$.

\begin{proof} Since $G$ acts faithfully and irreducibly on $V$, $[G^\circ, G^\circ]$  acts trivially on $V$ by \cite[Lemma I.8.2]{ABC-book}, and since the action is faithful, $[G^\circ, G^\circ] = 1$ and $G^\circ$ is abelian. By a similar argument,  $G^\circ$ is $p^\perp$-group.  By \cite[Proposition I.11.7]{ABC-book} $G^\circ$ is a good torus.  By  \cite[Fact I.9.5]{ABC-book} there is a subgroup $G_\infty < G$ such that $G ^\circ \cap G_\infty$ is the torsion part of $G^\circ$ and $G^\circ G_\infty= G$. Obviously, $G_\infty$ is a locally finite group and $G$ is the definable closure of $G_\infty$.

Now we can repeat, with very small changes, the proof of Theorem \ref{th:linearity}(1).
\end{proof}

\section{Historical and other comments} I wish to conclude the paper with a few words about the balance of the model-theoretic and the group-theoretic components in the theory of simple groups of finite Morley rank.

\subsection{Simple algebraic groups, Chevalley groups and the work version of the  Cherlin-Zilber Conjecture}

In the classification theory of simple groups of finite Morley rank, as it stands  now, the structural theory of simple algebraic groups is heavily used, as a rule, in the form of a summary statement:
\begin{quote}
\emph{A simple algebraic group over an algebraically closed field $K$ is a Chevalley group over $K$.}
\end{quote}
A Chevalley group over a field or a ring $R$ is viewed as a group given by some specific generators and relations which involve parameters from $R$ \cite{Borel1970}. It can be shown that
\begin{quote}
\emph{A  Chevalley group over an algebraically closed field $K$ is a simple algebraic group over $K$.}
\end{quote}
In works aimed at a prooving the  Cherlin-Zilber Algebraicity Conjecture is usually used in the following form:

\begin{quote}
  \emph{An infinite  simple group of finite Morley rank is a Chevalley group over an algebraically closed field.}
\end{quote}

This allows us to use very powerful group-theoretic characterisations of Chevalley groups and ignore the algebraic geometry aspects of the theory. See, for example, how the Curtis-Tits-Phan-Lyons Theorem \cite{Gorenstein1996}, which describes Chevalley groups   as amalgams (in the group-theoretic sense) of groups of type $\mathop{\rm SL}_2$ or $\mathop{\rm PSL}_2$, is used in \cite{Berkman2008}.

Let\/ $G$ be a simple algebraic groups over an algebraically closed field $K$. Let $T$ be a maximal torus in $G$. The canonical approach to  description of $G$ as a Chevalley group  is to associate with $T$ the Weyl group, a finite system $\Phi$ of roots, root subgroups, etc. Almost all information about $G$ needed for using $G$ within an attempted proof of the Cherlin-Zilber Conjecture is contained in the set of the so-called \emph{root $\mathop{SL}_2$-subgroups} which can be characterised as Zariski closed subgroups in $G$ isomorphic to $\mathop{SL}_2(K)$ or $\mathop{PSL}_2(K)$ and normalised by $T$. They are labelled by pairs of roots $\pm r \in \Phi$, and they generate $G$. The origins of the concept go to the paper by Borel  and de Siebenthal  of 1949 on the structure of compact Lie groups \cite{Borel-deSiebenthal1949}; see the construction of root $\mathop{SL}_2$-subgroups in \cite[3.2(1)]{Borel1970}. The following special case of  the Curtis-Tits-Phan-Lyons Theorem is formulated in \cite[Proposition 2.1.]{Berkman2008}.

\begin{fact} \label{fact:amalgam}
Let\/ $\Phi$ be an irreducible finite root system of s rank at least $3$, and
let $\Pi$ be a system of fundamental roots for\/ $\Phi$. Let $X$ be a group generated by subgroups $X_r$ for
$r \in \Pi$. Suppose that either $[X_r, X_s]=1$ or  $X_{rs} = \langle X_r,X_s\rangle$  is a Chevalley group over an algebraically closed  field  with the root system  $\Phi_{rs}$ spanned by $r$ and $s$, and with $X_r$ and $X_s$ corresponding root\/ $\mathop{\rm SL}_2$-subgroups with respect to some maximal torus of $X_{rs}$.
Then $X/Z(X)$ is isomorphic to a Chevalley group with the root system $\Phi$ via a map carrying the subgroups $X_r$ to root\/
$\mathop{\rm SL}_2$-subgroups.
\end{fact}

\subsection{Central extensions of simple algebraic groups}

However there is a model-theoretic twist again: all these arguments rely on the description of central extensions of Chevalley groups in the finite Morley rank context due to Tuna Alt\i nel and Gregory Cherlin \cite{Altinel-Cherlin1999}, which, in its turn, relies on model theoretic results by Newelski and Wagner (independently):

\begin{fact} \emph{(}\cite{Newelski1991,Wagner2001},  cf. \cite[Lemma I.4.16(2)]{ABC-book}\emph{)}.
Let $K$ be a field of finite
Morley rank and $X$ a definable subgroup of $K^\times$ which contains the multiplicative group of
an infinite subfield $F$ of $K$ (not assumed definable). Then $X = K^\times$.
\end{fact}

In particular, \cite{Altinel-Cherlin1999} allows to conclude, that if the subgroup $X$ in  Fact~\ref{fact:amalgam} is of finite Morley rank, then  not only $X/Z(X)$, but $X$ itself is a Chevalley group.

\subsection{Good tori} It is worth noticing that a key ingredient of the proofs of Theorem \ref{fact:intermediate-subgroup} and of Theorem \ref{th:linearity-2}, a `\emph{good torus}', a concept introduced by Gregory Cherlin \cite{Cherlin2005}, is rooted in the model theory. In the book  \cite{ABC-book}, Proposition I.11.7, quoted in the proofs,  goes back to Proposition I.4.15, which is a deep model-theoretic  result of 2001 by Frank Wagner \cite{Wagner2001}.

\subsection{Bi-interpretability} Another key ingredient, the bi-interpretability of the simple algebraic group $G$ over an algebraically closed filed $K$, and the field $K$, is Bruno Poizat's result \cite{poizat1988} of 1988. Its proof is purely model-theoretical and does not use the structural theory or classification of simple algebraic groups. What is even more remarkable, Bruno Poizat does not even assume  that $G$ is linear -- in his paper,  the structure of $G$ as an algebraic variety is proved to be definable in the group language of $G$. Its prehistory is also interesting: the model-theoretic ideas underpinning the proof can be traced back to Zilber's result of 1977. As   Gregory Cherlin discussed it in 1979,

\begin{quote}
[\dots] \emph{Zilber\/ \cite{Zilber1977A} gives an elegant proof that
a simple algebraic group over an algebraically closed field is\/ $\aleph_1$-categorical [16, Corollary to Theorem 3.2]. I had observed (Fall 1976) that this result can be obtained easily, but at some length, from the known structure theory for such groups (using a good deal of\/ \cite{Cherlin-Reineke1976} and the generators and relations of\/ \cite{Steinberg}). Zil'ber's proof is short and uses no structure theory.} (\cite[p. 2]{Cherlin1979}, reference numbers are updated  to match the bibliography of this paper.)
\end{quote}

Not surprisingly, relations between model-theoretic properties between a Chevalley group and its field (not necessary algebraically closed) or a ring of definition are of natural interest.

At the present time, the most powerful result belongs to Elena Bunina \cite{Bunina2022}.

\begin{quote}
\emph{If\/ $G(R) = G_\pi(\Phi, R)$  is a
Chevalley group of rank\/ $> 1$, $R$ is a local ring \emph{(}with $\frac{1}{2}$ for the root systems\/ $\mathop{\rm A}_2, \mathop{\rm B}_l, \mathop{\rm C}_l, \mathop{\rm F}_4, \mathop{\rm G}_2$
and with $\frac{1}{3}$ for $\mathop{\rm G}_2$\emph{)}, then the group $G(R)$ is regularly bi-interpretable with the
ring $R$.}
\end{quote}
As usual, the Chevalley group $G_\pi(\Phi, R)$ is  constructed from the root system $\Phi$, a
ring $R$ and a representation $\pi$ of the corresponding Lie algebra \cite{Borel1970}.

But the bi-interpretability of a Chevalley group over an algebraically closed field $K$ with this field is a relatively easy result.

\subsection{Bi-interpretability in the black box algebra} \label{sec:black-box-groups} Anatoly Maltsev  was the pioneer, in 1961, of the study of bi-interpretability  of Chevalley groups and their fields of definition. Theorem 4 of his paper \cite{Maltsev1961} states the bi-interpretability of linear groups $G = \mathop{\rm GL}_{n}(K)$, $\mathop{\rm PGL}_{n}(K)$, $\mathop{\rm SL}_{n+1}(K)$, and $\mathop{\rm PSL}_{n+1}(K)$, $n \geqslant 2$ over a field $K$  and  the field $K$.

Moreover, Maltsev had shown that this bi-interpretability is recursive: there are algorithms which rewrite formulae from $\mathop{\rm Th}(G)$ as formulae from $\mathop{\rm Th}(K)$, and vice versa. This algorithmic aspect has interesting and somewhat bizarre analogues in \emph{Black Box Algebra} as developed by \c{S}\"{u}kr\"{u} Yal\c{c}\i nkaya and myself \cite{BY2024}. The black box algebra deals with finite algebraic structures (in particular, Chevalley groups over finite fields), where, however, algebraic operations are computed by  `black boxes' (these are  finite analogues  of  definable structures from model theory).  Homomorphisms have to be computed (and first order formulae evaluated) by Monte-Carlo algorithms in probabilistic polynomial time.  Polynomial time morphisms are analogues of definable homomorphisms from model theory. Interestingly, this approach gives some indication why  Maltsev skipped, in his theorem, the groups $\mathop{\rm SL}_{2}(K)$, and $\mathop{\rm PSL}_{2}(K)$: in the black box context, we do not have a direct access to non-trivial unipotent elements even in these ``small'' groups. In model theory, their existence is a basic statement
\begin{equation}
(\exists u)\left(u^p=1 \wedge u \ne 1\right), \label{eq:unipotent}
\end{equation}
but in the black box groups  the quantifier $\exists$ means ``can be found in probabilistic polynomial time'' and  proof of (\ref{eq:unipotent}) for $\mathop{\rm SL}_{2}(K)$ and $\mathop{\rm PSL}_{2}(K)$ was believed to be an intractable problem.  Even this innocently looking problem was seen as impossibly difficult:
\bq
Assume that you are given several matrices $M_1,\dots, M_m$ of size $n\times n$ over a finite field $\mathbb{F}_{p^k}$ generating a subgroup $X$ isomorphic to $\mathop{\rm SL}_{2}(\mathbb{F}_{p^\ell})$. Find in $X$ a non-trivial unipotent element (that is, an elements of order $p$).
\eq
The reason for that is simple: the probability to hit a unipotent element at random is about $\frac{1}{p^k}$ and is exponentially small with the growth of $k$, even with the small values of $p$.

\c{S}\"{u}kr\"{u} Yal\c{c}\i nkaya and myself clarified all that in  \cite{BY2018}, by constructing  $\mathop{\rm PGL}_{2}(K)$ from $\mathop{\rm PSL}_{2}(K)$ viewed as a pure group, then interpreting a projective plane $\mathbb{P}^2(K)$ in $\mathop{\rm PGL}_{2}(K)$, and,  finally,  interpreting $K$ in  $\mathbb{P}^2(K)$ (the last step is well-known but has some twists in the polynomial time setting). In bigger Chevalley groups, finding unipotent elements is done by recursion to these ``small'' cases (see Yal\c{c}\i nkaya \cite{Yalcinkaya2007,Yalcinkaya2007} and our monograph \cite{BY2024}).   This was  considerably more difficult than Maltsev's analysis in \cite{Maltsev1961} -- but still, Maltsev was the pioneer.

Another example of exchange of ideas between the black box groups theory and the  theory of groups of finite Morley rank was given in Section \ref{sec:example}. In the black box group theory, a group homomorphism $\varphi: G \longrightarrow H$ with a black box for its graph $\Gamma_\varphi < G\rtimes H$ is not necessarily polynomial time computable; we call it \emph{protomorphism}. The concept of protomorphism is central to the theory of black box groups.

\subsection{Alternative versions of proof of Theorem \ref{fact:intermediate-subgroup}} I will now outline two alternative versions of proof of Theorem \ref{fact:intermediate-subgroup} which use the structural theory of simple algebraic groups to reduce the proof  to the case of simple algebraic groups $G$ of type $A_1$, that is, $G = \mathop{PSL}_2(K)$ or  $\mathop{SL}_2(K)$.

\begin{lemma} \label{lm:enough} It suffices to prove Theorem \emph{\ref{fact:intermediate-subgroup}} for $G$ of type $G$ is of type $A_1$, that is, $G = \mathop{PSL}_2(K)$ or  $\mathop{SL}_2(K)$.
\end{lemma}

\begin{proof} Assume that we are in the setup of Theorem \ref{fact:intermediate-subgroup} and that we already know that Theorem \ref{fact:intermediate-subgroup} is true in the special case of  $G$ being of type $A_1$, that is, $G = \mathop{PSL}_2(K)$ or  $\mathop{SL}_2(K)$.

Let us pick in $G_\infty$ a maximal torus $T_\infty$ and set $T=C_G(T_\infty)$; this is maximal torus in $G$, and it equals to the Zariski closure of $T_\infty$. Denote by $\mathcal{L}_\infty$ the set of all root $\mathop{SL}_2$-subgroups in $G_\infty$ normalised by the torus $T_\infty$ and by $\mathcal{L}$ the set of their Zariski closures, then $\mathcal{L}$ is the set of all root $\mathop{SL}_2$-subgroups in $G$ normalised by the torus $T$.

Let now $L \in \mathcal{L}$, then $L_\infty = L \cap G_\infty$ is $\mathop{(P)SL}_2(K_\infty)$ and
\[
L_\infty \leqslant M \cap L \leqslant L
\]
and by the assumptions of the Lemma, $L= M\cap L < M$.  Since the system $\mathcal{L}$ generates the group $G$, $M=G$. This proves the Lemma.
\end{proof}

\begin{lemma} \label{lm: A1}
Theorem \emph{\ref{fact:intermediate-subgroup}} is true if $G$  is of type $A_1$, that is, $G = \mathop{PSL}_2(K)$ or  $\mathop{SL}_2(K)$.
\end{lemma}

\begin{proof} There are at least five approaches to a proof of this lemma.

\ben
\item This is the most direct and straightforward approach to the proof: let $T_\infty$ be a  torus in $G_\infty$ and $T = C_G(_\infty)$ is a torus in $G$. By basic linear algebra, $T$ is contained in a Borel subgroup of $G$ and is therefore a good torus by \cite[Proposition I.11.7]{ABC-book}. Hence $M \cap T = T$ and $T < M$. If $U_\infty$ and $V_\infty$ are the two maximal unipotent subgroups in $G_\infty$ normalised by $T_\infty$ then $U = [U_\infty, T]$ and  $V = [V_\infty, T]$ are  two different maximal unipotent subgroups in $G$ and belongs to $M$,  Since $\langle U, V \rangle  = G$, we have  $M = G$. \hfill $\square$

    Other four approaches are only indicated,

\item The lemma immediately  follows from Theorem 4 of Bruno Poizat's seminal paper \cite{poizat2001} (which, in its turn,  is based on the model theoretic, by their nature, results by Frank Wagner \cite{Wagner1990,Wagner2001}).

 \item \cite[Theorem 4]{poizat2001} has been drastically improved by the very neat result of the paper of Mustafin and Poizat \cite{Mustafin-Poizat2005}: a superstable non-solvable subgroup of $\mathop{\rm SL}_2(K)$ is conjugated to $\mathop{\rm SL}_2(k)$, where $k$ is an
algebraically closed subfield of $K$; it is therefore transparent that, if you assume, in addition,  that the pair of groups has a finite Morley rank, so has the pair of
fields, and $K = k$.

\item In odd characteristics, the lemma is an almost immediate consequence of the difficult and important result by Adrien Deloro and the late \'{E}ric Jaligot \cite{Deloro-Jaligot2016}. Indeed it follows from the well known properties of the group $\mathop{PSL}_2(K)$ that the subgroup $M$ satisfies the assumptions of their theorem, and therefore $M$ is isomorphic to $\mathop{PSL}_2(F)$ for some algebraically closed field $F$ (of the same characteristic as $K$, of course); after that it becomes  obvious that $M=G$.

\item In characteristic $2$, the lemma follows from \cite{Borovik-DeBonis-Nesin1994}. Moreover, it \emph{de facto} follows from an ancient result (the unbelievable 1900!) by Burnside \cite{Burnside1900}, see discussion in \cite[Sections 4 and 5]{Borovik-Mathematics-Inherited2013} and in \cite[pp.\ 11--12]{Feit1979}.
\een
To my taste, approach (1) is the simplest and best fits the needs of classification of simple groups of finite Morley rank.
\end{proof}


\section*{Acknowledgements}

My special thanks go to Ay\c{s}e Berkman, my research collaborator and co-author of many years. Problems solved in this paper originate in our joint project \cite{berkman-borovik-sharp,berkman-borovik,berkman-borovik-solvable}.

I was much influenced by ideas that my co-authors Tuna Alt\i nel, Ay\c{s}e Berkman, Jeff Burdges,  Adrien Deloro,  Gregory Cherlin, Ali Nesin, and \c{S}\"{u}kr\"{u} Yal\c{c}\i nkaya shared with me over years of our collaboration.

Gregory Cherlin kindly copy-edited an earlier version of this paper; any grammatical and orthographic errors introduced by me later are my own responsibility.  Yuri Zarhin sent to me his paper \cite{Bandman-Zarhin2023}, which extended the range of Jordan subgroups.
Theorem \ref{th:linearity} and Corollary \ref{corollary: algebraic-groups} were first proved for simple groups $G$; Gregory Cherlin suggested that the result remains valid for connected groups. Questions, comments and advice from Adrien Deloro, Gregory Cherlin, Bruno Poizat, and \c{S}\"{u}kr\"{u} Yal\c{c}\i nkaya  significantly improved the paper and led to new results included -- for example, to Theorem \ref{th:linearity} part (2).

The anonymous referee made a number of comments which helped to improve the text.

\end{document}